\documentclass[12pt,a4paper]{article}
\setlength{\parindent}{22pt}
\setlength{\parskip}{5pt}
\setlength{\baselineskip}{9pt}
\setlength{\textheight}{22.5true cm}
\setlength{\textwidth}{16true cm}
\oddsidemargin 0pt
\raggedbottom
\headsep=0pt
\usepackage{amsfonts}
\usepackage{amssymb}
\usepackage{mathrsfs}
\usepackage{amsmath}
\usepackage{amsthm}
\usepackage{enumerate}
\usepackage{dsfont}

\allowdisplaybreaks
\pagestyle{plain}

\newtheorem{defn}{Definition}[section]
\newtheorem{thm}[defn]{Theorem}

\newtheorem{prop}[defn]{Proposition}
\newtheorem{cor}[defn]{Corollary}
\newtheorem{ex}[defn]{Example}
\newtheorem{re}[defn]{Remark}
\bibliographystyle{plain}

\begin{document}
\title{{\bf Product and complex
structures on 3-Bihom-Lie algebras
}}
\author{\normalsize \bf  Juan Li, Ying Hou, Liangyun Chen}
\date{{{\small{ School of Mathematics and Statistics, Northeast Normal University,  \\Changchun 130024,  China
 }}}} \maketitle

\date{}
\maketitle
\begin{abstract}
In this paper, we first introduce the notion of a product structure on a $3$-Bihom-Lie algebra which is a Nijenhuis operator with some
conditions. And we provide that a $3$-Bihom-Lie algebra has a product structure if and only if it is the direct sum of two vector spaces which are also Bihom subalgebras. Then we give four special conditions such that each of them can make the $3$-Bihom-Lie algebra has a special decomposition. Similarly, we introduce the definition of a complex structure
on a $3$-Bihom-Lie algebra and there are also four types special complex structures. Finally, we
show that the relation between a complex structure and a product structure.

\noindent\textbf{Keywords:} 3-Bihom-Lie algebras, product structures, complex structures, complex product structures
\\
\noindent {\em MSC(2020): 17A40, 17B61, 17D99.}
\end{abstract}
\renewcommand{\thefootnote}{\fnsymbol{footnote}}
\footnote[0]{Corresponding author(L. Chen): chenly640@nenu.edu.cn.}
\footnote[0]{Supported by  NNSF of China (Nos. 11771069 and 12071405)}

\section{Introduction}

In \cite{GMMP}, the authors introduced the notion of Bihom-algebras when they studied generalized Hom-structures. There are two twisted maps $\alpha$ and $\beta$ in this class of algebras. Bihom-algebras are Hom-algebras with $\alpha=\beta$. And if $\alpha=\beta=\rm Id$, Bihom-algebras will be return to algebras. Since then many authors are interesting in Bihom-algebras, such as Bihom-Lie algebras \cite{CQ}, Bihom-Lie superalgebras \cite{JLY}, Bihom-Lie colour algebras \cite{KAA}, BiHom-bialgebras \cite{LMC} and so on. In particular, the authors given the definition of $n$-Bihom-Lie algebras and $n$-Bihom-Associative algebras in \cite{AAS}. Considering $n=3$, the authors given some constructions of $3$-Bihom-Lie algebras in \cite{LC}. Moreover, in \cite{BN} the notion of Nijenhuis operators on $3$-BiHom-Lie superalgebras were given, and the authors studied the relation between Nijenhuis operators and Rota-Baxter operators.

Product structures and complex structures with some conditions are Nijenhuis operators. Many authors studied product structures and complex structures on Lie algebras from many points of view in \cite{AA,AB1,AB2,AS,SS}. In particular, the authors have studied product structures and complex structures on 3-Lie algebras in \cite{ST}. And then product structures and complex structures on involution Hom-3-Lie algebras have been studied in \cite{QLJ}. Based on these facts, we want to consider the product structures and complex structures on 3-Bihom-Lie algebras.

This paper is organized as follows. In Section 2, we recall the definition of  3-Bihom-Lie algebras
and Nijenhuis operators on 3-Bihom-Lie algebras. In Section 3, we introduce the notion of a product structure on 3-Bihom-Lie algebra, and obtain that the necessary and sufficient condition of a 3-Bihom-Lie algebra having a product structure is that it is the direct sum of two Bihom-subalgebras (as vector spaces). Then we give four special product structures. In Section 4, we introduce the notion of a complex structure on a 3-Bihom-Lie
algebra and also give four special conditions. And then we introduce the notion of a complex product structure on 3-Bihom-Lie algebra which is a product structure and a complex structure with a condition. We give the necessary and sufficient condition of a 3-Bihom-Lie algebra having a complex product structure.

\section{Preliminary}

\begin{defn}\cite{AAS}
A $3$-Bihom-Lie algebra over a field $\mathbb{K}$ is a
$4$-tuple $(L,[\cdot,\cdot,\cdot],\alpha,\beta)$, which consists of a vector
space $L$, a $3$-linear operator $[\cdot,\cdot,\cdot]
\colon L\times L\times L\rightarrow L$ and two linear maps $\alpha,\beta \colon L\rightarrow L$, such that $\forall\, x,y,z,u,v\in L$,
\begin{enumerate}[(1)]
\item $\alpha\beta=\beta\alpha$,
\item $\alpha ([x,y,z])=[\alpha(x),\alpha(y),\alpha(z)]$ and
  $\beta([x,y,z])=[\beta(x) ,\beta(y),\beta(z)]$,
\item Bihom-skewsymmetry: $[\beta(x),\beta(y),\alpha(z)]=-[\beta(y),\beta(x),\alpha(z)]=-[\beta(x),\beta(z),\alpha(y)],$
\item $3$-BiHom-Jacobi identity:
\begin{eqnarray*}
 &&[\beta^{2}(u),\beta^{2}(v),[\beta(x),\beta(y),\alpha(z)]]\\
 &=&[\beta^{2}(y),\beta^{2}(z),[\beta(u),\beta(v),\alpha(x)]]-[\beta^{2}(x),\beta^{2}(z),[\beta(u),\beta(v),\alpha(y)]]\\
 & &+[\beta^{2}(x),\beta^{2}(y),[\beta(u),\beta(v),\alpha(z)]].
\end{eqnarray*}
\end{enumerate}
\end{defn}
A $3$-Bihom-Lie algebra is called a \textit{regular} $3$-Bihom-Lie algebra if $\alpha$ and $\beta$ are algebra automorphisms.

\begin{defn}\cite{AAS}
 A sub-vector space $\eta\subseteq L$ is a Bihom subalgebra of $(L,[\cdot,\cdot, \cdot],\alpha,\beta)$ if $\alpha(\eta)\subseteq \eta$, $\beta(\eta)\subseteq \eta$ and $[x,y,z]\in \eta,~\forall\, x,y,z\in \eta$.
  It is said to be a Bihom ideal of $(L,[\cdot,\cdot,\cdot],\alpha,\beta)$ if $\alpha(\eta)\subseteq\eta$, $\beta(\eta)\subseteq \eta$ and $[x,y,z]\in \eta,~\forall\, x\in \eta,y,z\in L.$
\end{defn}

\begin{defn}\cite{BN}
Let $(L, [\cdot, \cdot, \cdot],\alpha,\beta)$ be a $3$-Bihom-Lie algebra. A linear map $N : L\rightarrow L$ is called
a Nijenhuis operator if for all $x, y, z \in L$, the following conditions hold :
\begin{align*}
\alpha N=& N\alpha,~\beta N=N\beta,\\
[N x, Ny, Nz]=& N[N x, Ny, z]+N[N x, y, Nz]+N[x, Ny, Nz]\\
&-N^{2}[ Nx, y, z]-N^{2}[x, Ny, z]-N^{2}[x, y, Nz]+N^{3}[x, y, z].
\end{align*}
\end{defn}

\section{Product structures on $3$-Bihom-Lie algebras}
In this section, we introduce the notion of a product structure on a 3-Bihom-Lie algebra by the Nijenhuis
opeartors. We find four special conditions, and each of them
gives a special decomposition of the $3$-Bihom-Lie algebra. At the end of this section, we give some examples.
\begin{defn}
Let $(L, [\cdot, \cdot, \cdot],\alpha,\beta)$ be a 3-Bihom-Lie algebra.
An almost product structure
on $L$ is a linear map $E : L\rightarrow L$ satisfying $E^2 = \rm Id$~$(E \neq\pm\rm Id)$, $\alpha E=E \alpha$ and $\beta E=E \beta$.

An almost product structure is
called a product structure if $\forall x, y, z\in L,$
\begin{align}\label{21}
E[x, y,x] =& [Ex,Ey,Ez]+[Ex,y,z]+[x,Ey,z]+[x,y,Ez]\nonumber\\
 &-E[Ex,Ey,z]-E[x,Ey,Ez]-E[Ex,y,Ez].
\end{align}
\end{defn}
\begin{re}
If $E$ is a Nijenhuis operator on a 3-Bihom-Lie algebra with $E^2 = \rm Id$. Then $E$ is a product structure.
\end{re}

\begin{thm}\label{thm1}
Let $(L, [\cdot, \cdot,\cdot],\alpha,\beta)$ be a 3-Bihom-Lie algebra. Then there is a product structure on $L$ if and only if $L=L_+\oplus L_-,$
where $L_+$ and $L_-$ are Bihom subalgebras of $L$.
\end{thm}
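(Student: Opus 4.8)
The plan is to prove both directions by the standard eigenspace decomposition associated with an involution. For the forward direction, suppose $E$ is a product structure. Since $E^2=\mathrm{Id}$, the operator $E$ has eigenvalues $\pm 1$, so I would set $L_+=\{x\in L\mid Ex=x\}$ and $L_-=\{x\in L\mid Ex=-x\}$ and verify the familiar fact that $L=L_+\oplus L_-$, with the projections given by $\frac{1}{2}(\mathrm{Id}+E)$ and $\frac{1}{2}(\mathrm{Id}-E)$. The conditions $\alpha E=E\alpha$ and $\beta E=E\beta$ immediately give $\alpha(L_\pm)\subseteq L_\pm$ and $\beta(L_\pm)\subseteq L_\pm$, so each eigenspace is preserved by the twisting maps.

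The substantive part of the forward direction is closure under the bracket. I would take $x,y,z\in L_+$ (so $Ex=x$, etc.) and apply the product-structure identity \eqref{21}. Substituting $Ex=x,Ey=y,Ez=z$ into the right-hand side, each of the seven bracket terms collapses to $\pm[x,y,z]$, and I expect the coefficients to combine to give $E[x,y,z]=[x,y,z]$, so $[x,y,z]\in L_+$. The analogous computation with $x,y,z\in L_-$ (where $Ex=-x$ and $E[x,y,z]$ should equal $-[x,y,z]$) shows $[L_-,L_-,L_-]\subseteq L_-$. Thus both $L_+$ and $L_-$ are Bihom subalgebras. This bookkeeping of signs is the main obstacle: one must check that the seven terms in \eqref{21} really do reduce to the single value $E[x,y,z]$ consistently on each eigenspace, which is the whole content of why \eqref{21} (rather than some other normalization) is the correct defining identity.

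For the converse, suppose $L=L_+\oplus L_-$ with both summands Bihom subalgebras. I would define $E$ by $E|_{L_+}=\mathrm{Id}$ and $E|_{L_-}=-\mathrm{Id}$, extended linearly; then $E^2=\mathrm{Id}$ and $E\neq\pm\mathrm{Id}$ by construction, so $E$ is at least an almost product structure. Since $\alpha$ and $\beta$ preserve each summand, they commute with $E$ (as $E$ acts by a scalar on each summand). It remains to verify identity \eqref{21}. By multilinearity of the bracket and of both sides of \eqref{21} in $x,y,z$, it suffices to check \eqref{21} when each of $x,y,z$lies in either $L_+$ or $L_-$, giving eight cases indexed by the sign pattern. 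In each case all the $E$'s act as known scalars, and I would verify that both sides agree; the cases where all three arguments lie in the same subalgebra use closure of that subalgebra under the bracket, while the mixed cases are purely formal sign identities that hold regardless of where the bracket lands.

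The main obstacle, as noted, is the mixed-sign bookkeeping in \eqref{21}: verifying that both sides match in each of the eight cases requires care, but no deep idea. Because the bracket is Bihom-skewsymmetric rather than fully symmetric, I would keep the arguments in a fixed order throughout and let the signs of the eigenvalues, not any reshuffling, do the work. I expect that the pure cases (all-plus and all-minus) are exactly where the subalgebra hypothesis is used and where the coefficient count must come out to a single surviving term, while the mixed cases reduce to an identity among the coefficients $\{+1,+1,+1,+1,-1,-1,-1\}$ appearing in \eqref{21} that holds independently of the bracket's value.
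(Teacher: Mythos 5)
Your proposal is correct and follows essentially the same route as the paper: the eigenspace decomposition $L_\pm=\ker(E\mp\mathrm{Id})$ for the forward direction (where substituting into \eqref{21} gives $E[x,y,z]=4[x,y,z]-3E[x,y,z]$ on $L_+$ and the analogous relation on $L_-$), and the involution $E|_{L_\pm}=\pm\mathrm{Id}$ verified against \eqref{21} by multilinearity for the converse. Your case-by-case organization of the converse (pure sign patterns use closure, mixed ones are formal coefficient identities) is a slightly cleaner bookkeeping of the single large expansion the paper carries out, but it is the same computation.
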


\begin{proof}
Let $E$ be a product structure on $L$. By $E^2 = \rm Id$,  we have that $L=L_+\oplus L_-$, where $L_+$ and $L_-$ are eigenspaces of $L$ associated to the eigenvalues 1 and -1 respectively, i.e., $L_+=\{x\in L |Ex=x\},~ L_-=\{x\in L |Ex=-x\}$. For all $x_1, x_2, x_3\in L_+$, we can show
\begin{align*}
E[x_1, x_2, x_3]&=[Ex_1, Ex_2, Ex_3]+[Ex_1, x_2, x_3]+[x_1, Ex_2, x_3]+[x_1, x_2, Ex_3]\\
&\quad -E[Ex_1, Ex_2, x_3]-E[x_1, Ex_2, Ex_3]-E[Ex_1, x_2, Ex_3]\\
&= 4[x_1, x_2, x_3]-3E[x_1, x_2, x_3].
\end{align*}
Thus, we have $E[x_1, x_2, x_3] = [x_1, x_2, x_3]$, which implies that $ [x_1, x_2, x_3]\in L_+$. And since $E\alpha(x_1)=\alpha E(x_1)=\alpha(x_1)$, we can get $\alpha(x_1)\in L_+$. Similarly, we have $\beta(x_1)\in L_+$. So $L_+$ is a Bihom subalgebra. By the same way, we can obtain that $L_-$ is also a Bihom subalgebra.

Conversely, a linear map $E : L \rightarrow L$ defined by
\begin{equation}\label{E defin}
E(x + y) = x-y, ~\forall x\in L_+, ~y\in L_- .
\end{equation}
Obviously, $E^2 =\rm Id$. Since $L_+$ and $L_-$ are Bihom subalgebras of $L$, for all $x\in L_+, y \in L_-$, we have
\begin{equation*}
E\alpha(x+y)=E(\alpha(x)+\alpha(y))=\alpha(x)-\alpha(y)=\alpha(x-y)=\alpha E(x-y),
\end{equation*}
which implies that $E\alpha=\alpha E$. Similarly, we have $E\beta=\beta E$. In addition, for all $x_i\in L_+,~y_j\in L_-,~i,j=1,2,3$, we can obtain
\begin{align*}
&[E(x_1+y_1), E(x_2+y_2), E(x_3+y_3)]+[E(x_1+y_1), x_2+y_2, x_3+y_3]\\
&+[x_1+y_1, E(x_2+y_2), x_3+y_3]+[x_1+y_1, x_2+y_2, E(x_3+y_3)]\\
&-E([E(x_1+y_1), E(x_2+y_2), x_3+y_3]+[E(x_1+y_1), x_2+y_2, E(x_3+y_3)]\\
&+[x_1+y_1, E(x_2+y_2), E(x_3+y_3)])\\
=&[x_1-y_1, x_2-y_2, x_3-y_3]+[x_1-y_1, x_2+y_2, x_3+y_3]+[x_1+y_1, x_2-y_2, x_3+y_3]\\
&+[x_1+y_1, x_2+y_2, x_3-y_3]-E([x_1-y_1, x_2-y_2, x_3+y_3]+[x_1-y_1, x_2+y_2, x_3-y_3]\\
&+[x_1+y_1, x_2-y_2, x_3-y_3])\\
=&4[x_1,x_2,x_3]-4[y_1,y_2,y_3]-E(3[x_1,x_2,x_3]-[x_1,x_2,y_3]-[x_1,y_2,x_3]-[x_1,y_2,y_3])\\
&-[y_1,x_2,x_3]-[y_1,x_2,y_3]-[y_1,y_2,x_3]+[y_1,y_2,y_3])\\
=&E([x_1,x_2,x_3]+[x_1,x_2,y_3]+[x_1,y_2,x_3]+[x_1,y_2,y_3]+[y_1,x_2,x_3]+[y_1,x_2,y_3]\\
&+[y_1,y_2,x_3]+[y_1,y_2,y_3])\\
=&E([x_1+y_1, x_2+y_2, x_3+y_3]).
\end{align*}
Therefore, $E$ is a product structure on $L$.
\end{proof}

\begin{prop}\label{prop1}
Let $(L, [\cdot,\cdot,\cdot],\alpha,\beta)$ be a $3$-Bihom-Lie algebra with $\alpha,\beta$ surjective and $E$ be an almost product structure on $L$.
If $E$ satisfies the following equation
\begin{equation}\label{23}
E[x, y,z] = [Ex, y,z],~\forall x,y,z\in L,
\end{equation}
then $E$ is a product structure on $L$ such that
$[L_+,L_+ L_-] = [L_-, L_-, L_+]=0$, i.e.,
$L$ is the direct sum of $L_+$ and $L_-$.
\end{prop}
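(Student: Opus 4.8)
The plan is to exploit the hypothesis \eqref{23} together with the surjectivity of $\alpha,\beta$ in order to promote the single-slot identity $E[x,y,z]=[Ex,y,z]$ into the full three-slot family $E[x,y,z]=[Ex,y,z]=[x,Ey,z]=[x,y,Ez]$, and then to substitute these into the defining equation \eqref{21}. Before doing so I would record two ``untwisted'' consequences of Bihom-skewsymmetry. Since $\alpha$ and $\beta$ are surjective, every element of $L$ lies in $\mathrm{Im}\,\beta$ and in $\mathrm{Im}\,\alpha$; writing arbitrary elements as $\beta(x),\beta(y),\alpha(z)$ and invoking item (3) of the definition yields the transposition rule $[a,b,c]=-[b,a,c]$ for all $a,b,c\in L$, and, by combining the two equalities in (3), the cyclic rule $[\beta(x),\beta(y),\alpha(z)]=[\beta(z),\beta(x),\alpha(y)]$.

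Next I would derive the remaining slot identities. The second-slot version is immediate from \eqref{23} and the transposition rule: $E[x,y,z]=-E[y,x,z]=-[Ey,x,z]=[x,Ey,z]$. The third-slot version is the crux. Here I would write $x=\beta(x')$, $y=\beta(y')$, $z=\alpha(z')$ by surjectivity, use $E\alpha=\alpha E$ to move $E$ inside the $\alpha$-slot as $\alpha(Ez')$, apply the cyclic rule to rotate that slot into the first position, peel $E$ off by \eqref{23} (using $\beta E=E\beta$), and then rotate back by two further applications of the cyclic rule; this produces $[x,y,Ez]=E[x,y,z]$.

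With the three relations and $E^2=\mathrm{Id}$ in hand, each of the seven brackets on the right-hand side of \eqref{21} collapses to $E[x,y,z]$: the three single-$E$ terms are handled directly by the slot identities; the triple-$E$ term $[Ex,Ey,Ez]$ reduces to $E[x,y,z]$ after pulling $E$ out of all three slots in turn; and each of the three double-$E$ terms $E[Ex,Ey,z]$, $E[x,Ey,Ez]$, $E[Ex,y,Ez]$ reduces first to $E[x,y,z]$ in the same way. Summing gives $4E[x,y,z]-3E[x,y,z]=E[x,y,z]$, which is exactly the left-hand side, so $E$ is a product structure. For the decomposition I would take $x_1,x_2\in L_+$ and $y\in L_-$: the first-slot relation gives $E[x_1,x_2,y]=[Ex_1,x_2,y]=[x_1,x_2,y]$, forcing $[x_1,x_2,y]\in L_+$, while the third-slot relation gives $E[x_1,x_2,y]=[x_1,x_2,Ey]=-[x_1,x_2,y]$, forcing it into $L_-$; hence it lies in $L_+\cap L_-=0$. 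The symmetric computation with $y_1,y_2\in L_-$, $x\in L_+$ yields $[L_-,L_-,L_+]=0$, and the transposition rule disposes of the remaining mixed brackets, so $L=L_+\oplus L_-$ as a direct sum.

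I expect the main obstacle to be precisely the third-slot identity $[x,y,Ez]=E[x,y,z]$. The Bihom-skewsymmetry treats the third argument asymmetrically, since it is $\alpha$-twisted while the first two are $\beta$-twisted, so the naked transposition available for the second slot does not apply to the third. It is exactly the combination of surjectivity of $\alpha,\beta$, the compatibility $E\alpha=\alpha E$ and $\beta E=E\beta$, and the cyclic rule extracted from (3) that allows one to transport $E$ through the $\alpha$-twisted slot; once this identity is secured, everything else is routine bookkeeping.
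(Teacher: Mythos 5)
Your proof is correct, and its key mechanism --- using surjectivity of $\alpha,\beta$ together with $E\alpha=\alpha E$, $E\beta=\beta E$ and the Bihom-skewsymmetry to transport $E$ through the $\alpha$-twisted third slot --- is exactly the engine of the paper's argument for the vanishing of the mixed brackets. The organization differs, however. You front-load the full family of slot identities $E[x,y,z]=[Ex,y,z]=[x,Ey,z]=[x,y,Ez]$ and then read both conclusions off of them; this is systematic and, for $[L_+,L_+,L_-]=0$, slightly cleaner than the paper, which writes $x_1=\beta(\tilde x_1)$ with $\tilde x_1\in L_+$ and thus tacitly uses $\beta(L_\pm)=L_\pm$ --- a point your version sidesteps because your slot identities hold for arbitrary elements of $L$. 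The paper, by contrast, verifies \eqref{21} by a direct substitution using only the first-slot hypothesis \eqref{23} and $E^2=\mathrm{Id}$: the terms $E[Ex,Ey,z]$ and $E[Ex,y,Ez]$ become $[x,Ey,z]$ and $[x,y,Ez]$, while $E[x,Ey,Ez]$ becomes $[Ex,Ey,Ez]$, so everything cancels against the positive terms except $[Ex,y,z]=E[x,y,z]$. This buys something your route gives up: the implication ``\eqref{23} implies product structure'' holds with no surjectivity assumption at all, surjectivity being needed only for the second half of the conclusion. Your version makes the entire proposition depend on surjectivity, which is harmless here since it is hypothesized, but it obscures which part of the statement actually requires it.
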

\begin{proof}
By $(\ref{23})$ and $E^2 =\rm Id$, we have
\begin{align*}
&[Ex, Ey, Ez]+[Ex, y, z]+[x, Ey, z]+ [x, y, Ez]\\
&-E[Ex, Ey, z]-E[x, Ey, Ez]-E[Ex, y, Ez]\\
= &[Ex, Ey, Ez]+E[x, y, z]+[x, Ey, z] +[x, y, Ez]\\
&-[E^2x, Ey, z]-[Ex, Ey, Ez]-[E^2x, y, Ez]\\
=& E[x, y, z].
\end{align*}
Thus $E$ is a product structure on $L$. By Theorem \ref{thm1}, we have $L=L_+\oplus L_-$, where $L_+$ and $L_-$ are Bihom subalgebras. For all $x_1, x_2 \in L_+, x_3\in L_-$, on one hand we have
$$E[x_1, x_2, x_3] = [Ex_1, x_2, x_3] = [x_1, x_2, x_3].$$ On the other hand. Since $\alpha,\beta$ are surjective, we have $\tilde{x}_1,\tilde{x}_2\in L_+, \tilde{x}_3\in L_-$ such taht $x_1=\beta(\tilde{x}_1),x_2=\beta(\tilde{x}_2)$ and $x_3=\alpha(\tilde{x}_3)$. So we can get
\begin{align*}
&E[x_1, x_2, x_3] = E[\beta(\tilde{x}_1),\beta(\tilde{x}_2), \alpha(\tilde{x}_3)]=E[\beta(\tilde{x}_3),\beta(\tilde{x}_1), \alpha(\tilde{x}_2)]\\
=&[E\beta(\tilde{x}_3),\beta(\tilde{x}_1), \alpha(\tilde{x}_2)]
=-[\beta(\tilde{x}_3),\beta(\tilde{x}_1), \alpha(\tilde{x}_2)]=-[\beta(\tilde{x}_1),\beta(\tilde{x}_2), \alpha(\tilde{x}_3)]\\
=&-[x_1, x_2, x_3].
\end{align*}
Thus, we obtain $[L_+,L_+,L_-] = 0$. Similarly, we have $[L_-,L_-,L_+] = 0$. The proof is finished.
\end{proof}

\begin{defn}
An almost product structure $E$ on a $3$-Bihom Lie algebra $(L, [\cdot, \cdot,\cdot], \alpha,\beta)$ is called
a strict product structure if $(\ref{23})$ holds.
\end{defn}

\begin{cor}
Let $(L, [\cdot, \cdot,\cdot], \alpha,\beta)$ be a $3$-Bihom Lie algebra with $\alpha,\beta$ surjective. Then $L$ has a strict product structure if and only if $L = L_+\oplus L_- $,
where $L_+$ and $L_-$ are Bihom subalgebras of $L$ such that $[L_+,L_+,L_-] = 0$ and $[L_-,L_-,L_+] = 0$.
\end{cor}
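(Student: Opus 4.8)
The plan is to derive both implications from the two preceding results rather than from scratch. For the forward direction, suppose $L$ carries a strict product structure $E$; by definition this means $E$ is an almost product structure satisfying $(\ref{23})$. This is precisely the hypothesis of Proposition \ref{prop1}, so I would simply invoke that proposition: it produces the decomposition $L = L_+ \oplus L_-$ into Bihom subalgebras together with the two vanishing conditions $[L_+, L_+, L_-] = 0$ and $[L_-, L_-, L_+] = 0$. Thus the ``only if'' part requires no new computation.

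For the converse, assume $L = L_+ \oplus L_-$ with $L_+, L_-$ Bihom subalgebras satisfying the two vanishing conditions. I would reuse the construction from Theorem \ref{thm1}, defining $E$ by $(\ref{E defin})$, that is, $E(x+y) = x-y$ for $x \in L_+$, $y \in L_-$. The proof of Theorem \ref{thm1} already establishes $E^2 = \mathrm{Id}$ together with $E\alpha = \alpha E$ and $E\beta = \beta E$, so $E$ is an almost product structure. It then remains to verify the strictness identity $(\ref{23})$, namely $E[a,b,c] = [Ea, b, c]$ for all $a,b,c \in L$.

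The key intermediate step, and the main obstacle, is to upgrade the two given vanishing conditions to the statement that every bracket whose arguments are drawn from both $L_+$ and $L_-$ vanishes, no matter in which slots those arguments sit. Here I would use the surjectivity of $\alpha$ and $\beta$: since $\alpha(L_\pm) \subseteq L_\pm$, $\beta(L_\pm) \subseteq L_\pm$ and $L = L_+ \oplus L_-$, the maps $\alpha,\beta$ restrict to surjections of each of $L_+$ and $L_-$, so any element of $L_+$ or $L_-$ can be written as $\beta(\cdot)$ or as $\alpha(\cdot)$ with preimage in the same subalgebra. A mixed bracket with its single ``minority'' argument out of canonical position can then be rewritten, via the Bihom-skewsymmetry $[\beta(u),\beta(v),\alpha(w)] = -[\beta(v),\beta(u),\alpha(w)] = -[\beta(u),\beta(w),\alpha(v)]$, so that the minority argument is moved into the slot covered by $[L_+,L_+,L_-]=0$ or $[L_-,L_-,L_+]=0$. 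Since a three-argument mixed bracket has either two arguments in $L_+$ and one in $L_-$, or one in $L_+$ and two in $L_-$, both cases are reached, and every mixed bracket is zero.

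With this in hand the verification of $(\ref{23})$ is routine. Writing $a = x_1 + y_1$, $b = x_2 + y_2$, $c = x_3 + y_3$ with $x_i \in L_+$, $y_i \in L_-$ and expanding by trilinearity, all eight terms vanish except the pure ones $[x_1,x_2,x_3] \in L_+$ and $[y_1,y_2,y_3] \in L_-$, whence $E[a,b,c] = [x_1,x_2,x_3] - [y_1,y_2,y_3]$. The analogous expansion of $[Ea, b, c] = [x_1 - y_1, x_2 + y_2, x_3 + y_3]$ collapses to the identical expression. Therefore $(\ref{23})$ holds, $E$ is a strict product structure, and the equivalence is proved.
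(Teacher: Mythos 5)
Your proposal is correct and follows essentially the same route as the paper: the forward direction by citing Proposition \ref{prop1}, and the converse by defining $E$ via $(\ref{E defin})$, using surjectivity of $\alpha,\beta$ together with Bihom-skewsymmetry to kill all mixed brackets, and then expanding by trilinearity. In fact you spell out the permutation argument for the mixed brackets more explicitly than the paper does, which simply asserts that step.
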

\begin{proof}
 Let $E$ be a strict product structure on $L$. By Proposition \ref{prop1} and Theorem \ref{thm1}, we can obtain $L = L_+\oplus L_- $,
where $L_+$ and $L_-$ are Bihom subalgebras of $L$ such that $[L_+,L_+,L_-] = 0$ and $[L_-,L_-,L_+] = 0$.

Conversely, we can define a linear endomorphism $E$ by $(\ref{E defin})$. Since $\alpha,\beta$ are surjective and $[L_+,L_+,L_-] =[L_-,L_-,L_+] = 0$. So we have $[L_+,L_-,L_+] =[L_-,L_+,L_+]=[L_+,L_-,L_-] =[L_-,L_+,L_-] = 0$. For all $x_i\in L_+, y_j\in L_-,i,j=1,2,3$, we can show
\begin{align*}
&E[x_1+y_1,x_2+y_2,x_3+y_3]\\
=&E([x_1,x_2,x_3]+[y_1,y_2,y_3])\\
=&[x_1,x_2,x_3]-[y_1,y_2,y_3]\\
=&[x_1-y_1,x_2+y_2,x_3+y_3]\\
=& [E(x_1+y_1),x_2+y_2,x_3+y_3].
\end{align*}
Thus $E$ is a strict product structure on $L$.
\end{proof}

\begin{prop}\label{3.7}
Let $(L, [\cdot, \cdot,\cdot], \alpha,\beta)$ be a $3$-Bihom Lie algebra and $E$ be an almost product structure on $L$.
If $E$ satisfies the following equation
\begin{equation}\label{26}
[x, y, z] = -[x, Ey, Ez]-[Ex, y, Ez]- [Ex, Ey, z], \forall x,y,z\in L,
\end{equation}
then $E$ is a product structure on $L$.
\end{prop}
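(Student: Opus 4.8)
The goal is to deduce the product-structure identity \eqref{21} from the single hypothesis \eqref{26}, using only $E^2=\mathrm{Id}$. The plan is to generate from \eqref{26}, by feeding $E$-shifted arguments back into it, enough linear relations among the various bracketed expressions that the entire right-hand side of \eqref{21} collapses to $E[x,y,z]$. I would resist manipulating \eqref{21} head-on and instead build the two ``sum rules'' described below.

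First I would substitute $Ex$ for $x$ in \eqref{26} and simplify with $E^2=\mathrm{Id}$ to obtain
\[
[Ex,y,z]=-[Ex,Ey,Ez]-[x,y,Ez]-[x,Ey,z],
\]
and, replacing instead $y$ by $Ey$ and then $z$ by $Ez$, two companion identities for $[x,Ey,z]$ and $[x,y,Ez]$. Writing $S:=[Ex,y,z]+[x,Ey,z]+[x,y,Ez]$ and adding the three identities, the right-hand sides combine into $-3[Ex,Ey,Ez]-2S$, so that $3S=-3[Ex,Ey,Ez]$ and hence
\[
[Ex,y,z]+[x,Ey,z]+[x,y,Ez]=-[Ex,Ey,Ez].
\]
This single-$E$ sum rule is the one mildly non-obvious step; the rest is bookkeeping.

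Next, \eqref{26} read directly (and rearranged) is itself a bracket-sum rule,
\[
[Ex,Ey,z]+[x,Ey,Ez]+[Ex,y,Ez]=-[x,y,z].
\]
Substituting both rules into the right-hand side of \eqref{21}, the leading term $[Ex,Ey,Ez]$ is cancelled by $S=-[Ex,Ey,Ez]$, while the three $E$-applied bracket terms contribute $-E\bigl(-[x,y,z]\bigr)=E[x,y,z]$. Thus the right-hand side of \eqref{21} reduces exactly to $E[x,y,z]$, so $E$ satisfies \eqref{21} and is a product structure.

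The main obstacle, such as it is, is simply recognizing that one should substitute $E$-shifted arguments into \eqref{26} to produce the first sum rule, rather than expanding \eqref{21} directly. Once both sum rules are in hand the verification is immediate; notably it uses neither the Bihom-skewsymmetry nor the $3$-BiHom-Jacobi identity, only \eqref{26} and $E^2=\mathrm{Id}$, and the compatibility of $E$ with $\alpha,\beta$ is already built into $E$ being an almost product structure.
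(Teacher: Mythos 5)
Your proof is correct and follows essentially the same route as the paper: both arguments reduce the right-hand side of \eqref{21} by using \eqref{26} once directly (to turn $-E$ of the three mixed terms into $E[x,y,z]$) and once with $E$-shifted arguments (to cancel $[Ex,Ey,Ez]$ against $[Ex,y,z]+[x,Ey,z]+[x,y,Ez]$). The only difference is cosmetic: the paper obtains your first sum rule in one step by substituting $(Ex,Ey,Ez)$ into \eqref{26}, which yields $[Ex,Ey,Ez]=-S$ immediately and avoids your division of $3S=-3[Ex,Ey,Ez]$ by $3$ (harmless here, but the one-shot substitution is cleaner and characteristic-independent).
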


\begin{proof}
Using $(\ref{26})$ and $E^2 = \rm Id$, we have
\begin{align*}
&[Ex, Ey, Ez]+[Ex, y, z]+[x, Ey, z]+ [x, y, Ez]\\
&-E[Ex, Ey, z]- E[x, Ey, Ez]-E[Ex, y, Ez]\\
=&-[Ex, E^2y, E^2z]-[E^2x, Ey, E^2z]- [E^2x, E^2y, Ez]\\
&+[Ex, y, z]+[x, Ey, z]+[x, y, Ez]+E[x, y, z]\\
=& E[x, y, z].
\end{align*}
Thus $E$ is a product structure on $L$.
\end{proof}

\begin{defn}
An almost product structure $E$ on a $3$-Bihom Lie algebra $(L, [\cdot, \cdot,\cdot], \alpha,\beta)$ is called
an abelian product structure if $(\ref{26})$ holds.
\end{defn}

\begin{cor}\label{cor3.9}
Let $(L, [\cdot, \cdot,\cdot], \alpha,\beta)$ be a $3$-Bihom Lie algebra. Then there is an abelian product
structure on $L$ if and only if $
L=L_+\oplus L_-,
$
where $L_+$ and $L_-$ are abelian Bihom subalgebras of $L$.
\end{cor}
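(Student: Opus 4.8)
The plan is to build on the two results already established for product structures and simply track what the word ``abelian'' adds, never redoing the decomposition from scratch. For the forward direction, suppose $E$ is an abelian product structure. By Proposition \ref{3.7}, $E$ is in particular a product structure, so Theorem \ref{thm1} immediately gives the decomposition $L=L_+\oplus L_-$ into Bihom subalgebras, where $L_\pm$ are the $\pm1$-eigenspaces of $E$. It remains only to prove that these two subalgebras are abelian. I would do this by feeding eigenvectors into the defining identity $(\ref{26})$: for $x,y,z\in L_+$ we have $Ex=x$, $Ey=y$, $Ez=z$, so the right-hand side of $(\ref{26})$ collapses to $-3[x,y,z]$, forcing $4[x,y,z]=0$ and hence $[x,y,z]=0$. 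The same substitution with $x,y,z\in L_-$ (where each $E$ contributes a sign, but the signs always occur in pairs, so each bracket is unchanged) again yields $[x,y,z]=-3[x,y,z]$, so $L_-$ is abelian as well. This step is clean provided the ground field has characteristic different from $2$, which I would note explicitly.

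For the converse, assume $L=L_+\oplus L_-$ with $L_\pm$ abelian Bihom subalgebras, and define $E$ by $(\ref{E defin})$, i.e.\ $E(x+y)=x-y$ for $x\in L_+$, $y\in L_-$. The conditions $E^2=\mathrm{Id}$ and the commutation relations $E\alpha=\alpha E$, $E\beta=\beta E$ are verified exactly as in the second half of the proof of Theorem \ref{thm1}, using $\alpha(L_\pm)\subseteq L_\pm$ and $\beta(L_\pm)\subseteq L_\pm$. Thus $E$ is an almost product structure, and the only substantive task is to check that the abelian identity $(\ref{26})$ holds, after which Proposition \ref{3.7} is not even needed since $(\ref{26})$ is by definition what makes $E$ an abelian product structure.

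This verification is where the real work lies, and I would carry it out by multilinear bookkeeping. Writing $x=a_1+b_1$, $y=a_2+b_2$, $z=a_3+b_3$ with $a_i\in L_+$ and $b_i\in L_-$, trilinearity expands every bracket into $2^3=8$ homogeneous terms indexed by a sign pattern $(\epsilon_1,\epsilon_2,\epsilon_3)$. Because $L_+$ and $L_-$ are abelian, the two \emph{pure} terms (all $+$ and all $-$) vanish, so $[x,y,z]$ reduces to the sum of the six \emph{mixed} terms, each with coefficient $1$. On the right-hand side of $(\ref{26})$ the operator $E$ inserts a factor $\sigma(\epsilon_i)=\pm1$ in exactly two of the three slots of each of the three brackets, so the coefficient of a given mixed term becomes $-(\sigma(\epsilon_2)\sigma(\epsilon_3)+\sigma(\epsilon_1)\sigma(\epsilon_3)+\sigma(\epsilon_1)\sigma(\epsilon_2))$. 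A short case check shows that for every mixed pattern one has $s_1s_2+s_2s_3+s_1s_3=-1$, so this coefficient is $+1$, matching $[x,y,z]$; for the two pure patterns the coefficient is $-3$, but those terms are zero anyway, so no discrepancy survives. Hence both sides of $(\ref{26})$ agree and $E$ is an abelian product structure. The main obstacle is therefore not conceptual but organizational: keeping the eight-term sign expansion straight, and recognizing that the pure-term mismatch is harmless precisely because of the abelian hypothesis.
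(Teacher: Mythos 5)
Your proposal is correct and follows essentially the same route as the paper: the forward direction invokes Theorem \ref{thm1} (via Proposition \ref{3.7}) and then substitutes eigenvectors into $(\ref{26})$ to get $[x,y,z]=-3[x,y,z]$, and the converse defines $E$ by $(\ref{E defin})$ and verifies $(\ref{26})$ by trilinear expansion, with the pure terms killed by the abelian hypothesis --- exactly the paper's computation, just organized via sign patterns. Your explicit remark that the cancellation $4[x,y,z]=0\Rightarrow[x,y,z]=0$ needs characteristic $\neq 2$ is a small point the paper leaves implicit.
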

\begin{proof}
Let $E$ be an abelian product structure on $L$. By Theorem \ref{thm1} and Propsitoon \ref{3.7}, we only need to show that $L_+$ and $L_-$ are abelian Bihom subalgebras of $L$. For all $x_1, x_2, x_3 \in L_+$, we have
\begin{align*}
[x_1, x_2, x_3]&=-[Ex_1, Ex_2, x_3]-[x_1, Ex_2, Ex_3]-[Ex_1, x_2, Ex_3]\\
&=-3[x_1, x_2, x_3],
\end{align*}
which implies that $[x_1, x_2, x_3]= 0$. Similarly, $[y_1, y_2, y_3]= 0, \forall y_1, y_2, y_3 \in L_-$.
Thus, $L_+$ and $L_-$ are abelian Bihom subalgebras.

Conversely, define a linear endomorphism $E : L\rightarrow L$ by $(\ref{E defin})$. Then for all $ x_i\in L_+, y_j\in L_-,i,j=1,2,3$,
\begin{align*}
&-[x_1+y_1, E(x_2+y_2), E(x_3+y_3)]-[E(x_1+y_1), E(x_2+y_2), x_3+y_3]\\
&-[E(x_1+y_1), x_2+y_2, E(x_3+y_3)]\\
=&-[x_1+y_1, x_2-y_2, x_3-y_3]-[x_1-y_1, x_2-y_2, x_3+y_3]-[x_1-y_1, x_2+y_2, x_3-y_3]\\
=&[x_1,x_2,y_3]+[x_1,y_2,x_3]+[x_1,y_2,y_3]+[y_1,x_2,x_3]+[y_1,x_2,y_3]+[y_1,y_2,x_3]\\
=&[x_1+y_1, x_2+y_2, x_3+y_3],
\end{align*}
i.e., $E$ is an abelian product structure on $L$.
\end{proof}

\begin{prop}\label{re}
Let $E$ be an almost product structure on a $3$-Bihom Lie algebra $(L, [\cdot, \cdot,\cdot], \alpha,\beta)$ .
If $E$ satisfies the following equation, for all $x,y,z\in L$,
\begin{equation}\label{30}
[x, y, z] = E[Ex, y, z]+E[x, Ey, z]+E[x, y, Ez],
\end{equation}
then $E$ is an abelian product structure on $L$ such that
$[L_+, L_+ ,L_-]\subseteq L_+,
[L_-, L_- ,L_+]\subseteq L_-.$
\end{prop}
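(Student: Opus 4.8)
The plan is to distill the hypothesis $(\ref{30})$ into a single workhorse identity and then read off both assertions from it. My first move is to apply $E$ to both sides of $(\ref{30})$: because $E^2=\Id$, every term of the form $E^2[\,\cdot\,,\cdot\,,\cdot\,]$ collapses to the bracket itself, so I obtain the linearized identity
\begin{equation*}
E[x,y,z]=[Ex,y,z]+[x,Ey,z]+[x,y,Ez],\qquad\forall\,x,y,z\in L.\tag{$\ast$}
\end{equation*}
Everything below follows from $(\ast)$ together with the hypothesis $(\ref{30})$.

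To prove that $E$ is an abelian product structure it is enough, by Proposition \ref{3.7}, to establish $(\ref{26})$. For this I substitute successively $x\mapsto Ex$, then $y\mapsto Ey$, then $z\mapsto Ez$ into $(\ast)$, each time using $E^2=\Id$ to reduce the doubled factor; this yields three explicit expressions for $E[Ex,y,z]$, $E[x,Ey,z]$ and $E[x,y,Ez]$. I then add the three equations. On the one hand the left-hand side is exactly the right-hand side of the hypothesis $(\ref{30})$, hence equals $[x,y,z]$; on the other hand the single-$E$ brackets on the right recombine into $3[x,y,z]$, while each of $[Ex,Ey,z]$, $[Ex,y,Ez]$, $[x,Ey,Ez]$ appears with coefficient $2$. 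Equating the two sides and solving for $[x,y,z]$ gives precisely $(\ref{26})$, so $E$ is an abelian product structure; in particular $E^2=\Id$ provides the eigenspace splitting $L=L_+\oplus L_-$ as in Theorem \ref{thm1}.

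For the two inclusions I apply $(\ast)$ directly to eigenvectors. Taking $x_1,x_2\in L_+$ and $x_3\in L_-$, so that $Ex_1=x_1$, $Ex_2=x_2$, $Ex_3=-x_3$, evaluation of $(\ast)$ at $(x_1,x_2,x_3)$ gives $E[x_1,x_2,x_3]=[x_1,x_2,x_3]+[x_1,x_2,x_3]-[x_1,x_2,x_3]=[x_1,x_2,x_3]$, so $[x_1,x_2,x_3]\in L_+$ and hence $[L_+,L_+,L_-]\subseteq L_+$. Symmetrically, for $y_1,y_2\in L_-$ and $y_3\in L_+$ the same evaluation produces $E[y_1,y_2,y_3]=-[y_1,y_2,y_3]$, giving $[L_-,L_-,L_+]\subseteq L_-$.

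The argument is computationally light, so I do not anticipate a genuine obstacle; the only point requiring care is the bookkeeping in the summation step, where one must track the sign coming from $(\ref{30})$ and the $E^2$-collapses accurately, so that the single-$E$ terms really cancel and the surviving coefficients assemble into exactly $(\ref{26})$ rather than an incorrect multiple of it.
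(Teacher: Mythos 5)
Your proof is correct, but it reaches the conclusion ``$E$ is an abelian product structure'' by a different route than the paper. The paper first verifies the product-structure identity $(\ref{21})$ directly by substituting $(\ref{30})$ into its left-hand side, then evaluates $(\ref{30})$ on triples from $L_+$ (resp.\ $L_-$) to get $[x_1,x_2,x_3]=3[x_1,x_2,x_3]$, concluding that both eigenspaces are abelian, and only then invokes Corollary \ref{cor3.9} to upgrade ``product structure'' to ``abelian product structure.'' You instead derive the defining identity $(\ref{26})$ of an abelian product structure purely algebraically: linearize $(\ref{30})$ to $(\ast)$, substitute $Ex$, $Ey$, $Ez$ one at a time, and sum, so that $(\ref{30})$ collapses the left side to $[x,y,z]$ while the right side assembles into $3[x,y,z]+2\bigl([Ex,Ey,z]+[Ex,y,Ez]+[x,Ey,Ez]\bigr)$. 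This is a clean shortcut: it bypasses the eigenspace decomposition and Corollary \ref{cor3.9} entirely for that part of the statement, at the cost of not making the abelianness of $L_\pm$ explicit (which the paper's route gives you for free). Your treatment of the two inclusions via $(\ast)$ on eigenvectors is the same computation as the paper's, which uses $(\ref{30})$ directly. One cosmetic remark: in the summation step the terms that ``recombine into $3[x,y,z]$'' are the brackets with \emph{no} $E$ (the $E^2$-collapses), not ``single-$E$ brackets''; the arithmetic you describe is nevertheless the right one.
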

\begin{proof}
By $(\ref{30})$ and $E^2 = \rm Id$ we have
\begin{align*}
&[Ex, Ey, Ez]+[Ex, y, z]+[x, Ey, z]+ [x, y, Ez]\\
&-E[Ex, Ey, z]-E[x, Ey, Ez]-E[Ex, y, Ez]\\
=&E[x, Ey, Ez]+E[Ex, y, Ez]+E[Ex, Ey, z]+E[x, y, z]\\
&-E[Ex, Ey, z]-E[x, Ey, Ez]-E[Ex, y, Ez]\\
=&E[x, y, z].
\end{align*}
Thus, we obtain that $E$ is a product structure on $L$. For all $x_1, x_2, x_3 \in L_+$, by $(\ref{30})$, we have
\begin{align*}
[x_1, x_2, x_3]&= E[Ex_1, x_2, x_3]+ E[x_1, Ex_2, x_3]+E[x_1, x_2, Ex_3]\\
&=3E[x_1, x_2, x_3]=3[x_1, x_2, x_3].
\end{align*}
So we get $[L_+, L_+, L_+] = 0$. Similarly, we have $[L_-, L_-, L_-] = 0$. By Corollary \ref{cor3.9}, $E$ is an
abelian product structure on $L$.
Moreover, for all $x_1, x_2 \in L_+, y_1 \in L_-$, we have
\begin{align*}
[x_1, x_2, y_1]&= E[Ex_1, x_2, y_1]+ E[x_1, Ex_2, y_1]+E[x_1, x_2, Ey_1]\\
&= E[x_1, x_2, y_1],
\end{align*}
which implies that $[L_+, L_+ ,L_-]\subseteq L_+$. Similarly, we have $[L_-, L_- ,L_+]\subseteq L_-$.
\end{proof}
\begin{re}
In Proposition \ref{re} we can also obtain that $[L_+, L_- ,L_+]\subseteq L_+$, $[L_-, L_+ ,L_+]\subseteq L_+$,  $[L_-, L_+ ,L_-]\subseteq L_-$,  $[L_+, L_- ,L_-]\subseteq L_-$.
\end{re}

\begin{defn}
An almost product structure $E$ on a $3$-Bihom Lie algebra $(L, [\cdot, \cdot,\cdot], \alpha,\beta)$ is called
a strong abelian product structure if $(\ref{30})$ holds.
\end{defn}

\begin{cor}
Let $(L, [\cdot, \cdot,\cdot], \alpha,\beta)$ be a $3$-Bihom Lie algebra. Then there is a strong abelian product structure on $L$ if and only if $
L= L_+\oplus L_- ,
$
where $L_+$ and $L_-$ are abelian Bihom subalgebras of $L$ such that
$[L_+, L_+ ,L_-]\subseteq L_+$, $[L_+, L_- ,L_+]\subseteq L_+, [L_-, L_+ ,L_+]\subseteq L_+, [L_-, L_+ ,L_-]\subseteq L_-, [L_+, L_- ,L_-]\subseteq L_-$ and $[L_-, L_-,L_+]\subseteq L_-$.
\end{cor}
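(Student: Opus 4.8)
The plan is to prove both implications by building on the structural results already obtained for strong abelian product structures, reserving the only genuine computation for the converse.

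For the forward implication, suppose $E$ is a strong abelian product structure, that is, an almost product structure satisfying $(\ref{30})$. By Proposition \ref{re}, $E$ is in particular an abelian product structure, so Corollary \ref{cor3.9} applies and produces the decomposition $L = L_+ \oplus L_-$ into abelian Bihom subalgebras. The six required containments are then exactly what has already been recorded: Proposition \ref{re} gives $[L_+, L_+, L_-] \subseteq L_+$ and $[L_-, L_-, L_+] \subseteq L_-$, and the Remark immediately following it supplies the remaining four, namely $[L_+, L_-, L_+] \subseteq L_+$, $[L_-, L_+, L_+] \subseteq L_+$, $[L_-, L_+, L_-] \subseteq L_-$ and $[L_+, L_-, L_-] \subseteq L_-$. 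Hence this direction needs no new argument.

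For the converse, I would define $E$ by $(\ref{E defin})$, so that $E$ acts as $+\Id$ on $L_+$ and as $-\Id$ on $L_-$. That $E$ is an almost product structure---$E^2 = \Id$ together with $E\alpha = \alpha E$ and $E\beta = \beta E$---follows just as in the proof of Theorem \ref{thm1}, using that $L_+$ and $L_-$ are Bihom subalgebras and hence invariant under $\alpha$ and $\beta$. The real content is the verification of $(\ref{30})$. By trilinearity of the bracket and linearity of $E$, it is enough to check $(\ref{30})$ when each of the three arguments lies entirely in $L_+$ or entirely in $L_-$. Writing $\epsilon_x = +1$ or $-1$ for the eigenvalue of $E$ on a homogeneous argument $x$, one computes $E[Ex, y, z] + E[x, Ey, z] + E[x, y, Ez] = (\epsilon_x + \epsilon_y + \epsilon_z)\,E[x,y,z]$, so the right-hand side of $(\ref{30})$ reduces to $(3 - 2k)\,E[x,y,z]$, where $k \in \{0,1,2,3\}$ counts how many arguments are taken from $L_-$.

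It then remains to match $(3 - 2k)\,E[x,y,z]$ against $[x,y,z]$ for each value of $k$. For $k = 0$ and $k = 3$ the bracket vanishes, since $L_+$ and $L_-$ are abelian, and both sides are zero. For $k = 1$ the three possible placements of the single $L_-$-argument are governed by $[L_+, L_+, L_-], [L_+, L_-, L_+], [L_-, L_+, L_+] \subseteq L_+$, so $E[x,y,z] = [x,y,z]$ and the factor $3 - 2k = 1$ yields equality; for $k = 2$ the hypotheses $[L_-, L_-, L_+], [L_-, L_+, L_-], [L_+, L_-, L_-] \subseteq L_-$ give $E[x,y,z] = -[x,y,z]$, and the factor $3 - 2k = -1$ again returns $[x,y,z]$. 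Thus $(\ref{30})$ holds in all cases and $E$ is a strong abelian product structure. The main thing to watch---and essentially the only obstacle---is that the six containment hypotheses are precisely those needed to force $[x,y,z]$ into the eigenspace whose sign cancels the scalar $3 - 2k$; with that bookkeeping in place the argument is routine.
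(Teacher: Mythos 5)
Your proof is correct, and the paper in fact states this corollary with no proof at all. Your argument is precisely the one the paper's pattern for the analogous corollaries suggests: the forward direction follows from Proposition \ref{re}, the Remark after it, and Corollary \ref{cor3.9}, while the converse defines $E$ by $(\ref{E defin})$ and verifies $(\ref{30})$ on homogeneous arguments; your eigenvalue bookkeeping with the factor $3-2k$ checks out in all four cases ($k=0,3$ giving $0=0$ by abelianness, $k=1,2$ giving the sign cancellation from the six containment hypotheses).
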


\begin{prop}\label{3.13}
Let $E$ be an almost product structure on a $3$-Bihom-Lie algebra $(L, [\cdot, \cdot,\cdot], \alpha,\beta)$. If $E$ satisfies
the following equation
\begin{equation}\label{31}
E[x, y, z] = [Ex, Ey, Ez],
\end{equation}
then $E$ is a product structure on $L$ such that
$[L_+, L_+ ,L_-]\subseteq L_-$, $[L_-, L_- ,L_+]\subseteq L_+$.
\end{prop}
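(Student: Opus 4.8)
The plan is to proceed in two stages, mirroring the structure of Propositions \ref{3.7} and \ref{re}: first verify that the defining identity \eqref{31} forces the product-structure identity \eqref{21}, and then use the eigenspace decomposition supplied by Theorem \ref{thm1} to read off the bracket inclusions.

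For the first stage, I would substitute \eqref{31} into each of the three terms $E[Ex,Ey,z]$, $E[x,Ey,Ez]$ and $E[Ex,y,Ez]$ appearing on the right-hand side of \eqref{21}. Applying \eqref{31} together with $E^2=\Id$ gives $E[Ex,Ey,z]=[E^2x,E^2y,Ez]=[x,y,Ez]$, and similarly $E[x,Ey,Ez]=[Ex,y,z]$ and $E[Ex,y,Ez]=[x,Ey,z]$. The three mixed linear terms $[Ex,y,z]+[x,Ey,z]+[x,y,Ez]$ then cancel exactly against these three rewritten terms, leaving only $[Ex,Ey,Ez]$, which equals $E[x,y,z]$ again by \eqref{31}. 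Hence \eqref{21} holds and $E$ is a product structure. This is the same cancellation pattern as in the earlier propositions, so no genuine difficulty arises here.

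For the second stage, Theorem \ref{thm1} yields $L=L_+\oplus L_-$ with $L_\pm$ the $\pm 1$-eigenspaces of $E$. Given $x_1,x_2\in L_+$ and $x_3\in L_-$, I would simply evaluate \eqref{31}: since $Ex_1=x_1$, $Ex_2=x_2$ and $Ex_3=-x_3$, we get $E[x_1,x_2,x_3]=[Ex_1,Ex_2,Ex_3]=-[x_1,x_2,x_3]$, so $[x_1,x_2,x_3]$ lies in the $-1$-eigenspace $L_-$; this gives $[L_+,L_+,L_-]\subseteq L_-$. The symmetric computation with $y_1,y_2\in L_-$ and $x_1\in L_+$ produces the eigenvalue $(-1)(-1)(+1)=+1$, whence $[L_-,L_-,L_+]\subseteq L_+$.

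There is no real obstacle in this argument; the only points requiring care are the correct use of $E^2=\Id$ when rewriting the nested $E[\,\cdot,\cdot,\cdot\,]$ terms in the first stage, and the careful tracking of the product of eigenvalue signs in the second stage.
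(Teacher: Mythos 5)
Your argument is correct and follows essentially the same route as the paper: use \eqref{31} together with $E^2=\Id$ to rewrite the three nested terms $E[Ex,Ey,z]$, $E[x,Ey,Ez]$, $E[Ex,y,Ez]$ so that they cancel the three mixed linear terms, leaving $[Ex,Ey,Ez]=E[x,y,z]$, and then read off the sign of the eigenvalue product on the eigenspace decomposition from Theorem \ref{thm1}. No gaps.
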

\begin{proof}
By $(\ref{31})$ and $E^2 = \rm Id$ we have
\begin{align*}
&[Ex, Ey, Ez]+[Ex, y, z]+[x, Ey, z]+ [x, y, Ez]\\
&-E[Ex, Ey, z]-E[x, Ey, Ez]-E[Ex, y, Ez]\\
=&E[x, y, z]+[Ex, y, z]+[x, Ey, z]+[x, y, Ez]\\
&-[E^2x, E^2y, Ez]-[Ex, E^2y, E^2z]-[E^2x, Ey, E^2z]\\
=&E[x, y, z].
\end{align*}
Thus, $E$ is a product structure on $L$. Moreover, for all $x_1, x_2 \in L_+, y_1 \in L_-$, we have
\begin{align*}
E[x_1, x_2, y_1] = [Ex_1, Ex_2, Ey_1] = -[x_1, x_2, y_1],
\end{align*}
which implies that $[L_+, L_+ ,L_-]\subseteq L_-$. Similarly, we have $[L_-, L_- ,L_+]\subseteq L_+$.
\end{proof}

\begin{re}
In Proposition \ref{3.13} we can also obtain that $[L_+, L_- ,L_+]\!\subseteq\! L_-$, $[L_-, L_+ ,L_+]$ $\subseteq L_-$,  $[L_-, L_+ ,L_-]\subseteq L_+$, $[L_+, L_- ,L_-]\subseteq L_+$
\end{re}

\begin{defn}
An almost product structure $E$ on a $3$-Bihom Lie algebra $(L, [\cdot, \cdot,\cdot], \alpha,\beta)$ is called
a perfect product structure if $(\ref{31})$ holds.
\end{defn}

\begin{cor}
Let $(L, [\cdot, \cdot,\cdot], \alpha,\beta)$ be a $3$-Bihom Lie algebra. Then $L$ has a perfect product structure if and only if $
L= L_+\oplus L_- ,
$
where $L_+$ and $L_-$ are Bihom subalgebras of $L$ such that
$
[L_+, L_+ ,L_-]\subseteq L_-$, $[L_+, L_- ,L_+]\subseteq L_-$, $[L_-, L_+ ,L_+]\subseteq L_-$, $[L_-, L_-,L_+]\subseteq L_+$, $[L_-, L_+ ,L_-]\subseteq L_+$ and $[L_+, L_- ,L_-]\subseteq L_+$.
\end{cor}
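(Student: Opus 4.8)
The plan is to follow the same template as the preceding corollaries: use Proposition \ref{3.13} together with its Remark for the forward implication, and an explicit eigenspace computation for the converse. For the forward direction, assume $E$ is a perfect product structure, so that (\ref{31}) holds. By Proposition \ref{3.13}, $E$ is in particular a product structure, so Theorem \ref{thm1} yields the decomposition $L = L_+ \oplus L_-$ into Bihom subalgebras, and we already have $[L_+, L_+, L_-] \subseteq L_-$ and $[L_-, L_-, L_+] \subseteq L_+$. The four remaining inclusions are precisely the content of the Remark following Proposition \ref{3.13}. In fact, all six follow at once from (\ref{31}): if a homogeneous triple has $k$ arguments in $L_-$, then (\ref{31}) gives $E$ of that bracket equal to $(-1)^k$ times itself, so the bracket is an eigenvector of $E$ with eigenvalue $(-1)^k$ and hence lies in $L_+$ when $k$ is even and in $L_-$ when $k$ is odd. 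Reading off $k$ for each of the six mixed brackets produces exactly the stated inclusions.

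For the converse, I would define $E$ by (\ref{E defin}), i.e.\ $E(x+y)=x-y$ for $x\in L_+$, $y\in L_-$. Then $E^2=\mathrm{Id}$ is immediate, and because $L_+$ and $L_-$ are Bihom subalgebras the relations $E\alpha=\alpha E$ and $E\beta=\beta E$ follow exactly as in the proof of Theorem \ref{thm1}. It remains to verify (\ref{31}). Writing arbitrary elements as $x_i+y_i$ with $x_i\in L_+$ and $y_i\in L_-$, I would expand $[x_1+y_1,x_2+y_2,x_3+y_3]$ by trilinearity into its eight homogeneous components. The two pure terms $[x_1,x_2,x_3]$ and $[y_1,y_2,y_3]$ lie in $L_+$ and $L_-$ because $L_\pm$ are subalgebras; the three components with exactly one $L_-$-entry lie in $L_-$ by the first three hypotheses; and the three with exactly one $L_+$-entry lie in $L_+$ by the last three hypotheses. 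Hence $E$ multiplies each component by its eigenvalue $\pm1$, and the resulting signed sum agrees term-by-term with the trilinear expansion of $[x_1-y_1,x_2-y_2,x_3-y_3]=[E(x_1+y_1),E(x_2+y_2),E(x_3+y_3)]$, which is exactly (\ref{31}).

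The computation is routine trilinear bookkeeping, so I do not expect a genuine obstacle. The only point requiring care is confirming that the six inclusion hypotheses, together with the two subalgebra-closure conditions, pin down the eigenspace of every one of the eight homogeneous components; these hypotheses are arranged to be exactly what is needed, no more and no less, to make the sign of $E$ on the left-hand side match the multilinear sign pattern $(-1)^{\#\{L_-\text{-entries}\}}$ on the right.
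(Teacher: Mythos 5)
Your argument is correct, and it fills in a proof that the paper actually omits: this corollary is stated without proof, evidently because it follows the same template as the proved analogues (the strict and abelian cases). Your route is exactly that template — Proposition \ref{3.13} and its remark (or, as you note, the single observation that (\ref{31}) forces each homogeneous bracket with $k$ entries in $L_-$ into the $(-1)^k$-eigenspace) for the forward direction, and the map (\ref{E defin}) with the eight-term trilinear expansion for the converse — so it matches what the paper intends.
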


\begin{cor}
A strict product structure on a $3$-Bihom-Lie algebra is a perfect product structure.
\end{cor}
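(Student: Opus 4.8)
The plan is to show directly that the defining equation of a strict product structure, equation $(\ref{23})$, forces the defining equation of a perfect product structure, equation $(\ref{31})$; that is, $E[x,y,z]=[Ex,y,z]$ for all $x,y,z$ should imply $[Ex,Ey,Ez]=E[x,y,z]$. Since both notions refer to the same underlying almost product structure $E$ (same $E^2=\mathrm{Id}$ and the same commutation $\alpha E=E\alpha$, $\beta E=E\beta$), it suffices to derive $(\ref{31})$ from $(\ref{23})$.

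The first observation is that $(\ref{23})$ only controls the first slot, so the real work is to show that $E$ may be pulled out of the second and third slots as well: $E[x,y,z]=[x,Ey,z]=[x,y,Ez]$. To get this I would combine $(\ref{23})$ with Bihom-skewsymmetry. Applying $E$ to $[\beta(x),\beta(y),\alpha(z)]=-[\beta(y),\beta(x),\alpha(z)]$ and using that $E$ commutes with $\alpha,\beta$ together with $(\ref{23})$ on the first slot yields $[\beta(Ex),\beta(y),\alpha(z)]=[\beta(x),\beta(Ey),\alpha(z)]$; iterating the two transpositions gives the cyclic identity $[\beta(x),\beta(y),\alpha(z)]=[\beta(z),\beta(x),\alpha(y)]$, from which the same manipulation moves $E$ into the third slot. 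At this stage the identities hold only for arguments of the twisted form $\beta(x),\beta(y),\alpha(z)$, so I would invoke the surjectivity of $\alpha$ and $\beta$ (as in Proposition \ref{prop1}) to write an arbitrary triple in this form and conclude $E[x,y,z]=[Ex,y,z]=[x,Ey,z]=[x,y,Ez]$ for all $x,y,z$. Once $E$ can be moved out of any slot, $(\ref{31})$ follows by a short chain using $E^2=\mathrm{Id}$:
\begin{align*}
[Ex,Ey,Ez]&=E[x,Ey,Ez]=E\bigl(E[x,y,Ez]\bigr)=[x,y,Ez]=E[x,y,z],
\end{align*}
which is exactly equation $(\ref{31})$, so $E$ is a perfect product structure.

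I expect the main obstacle to be precisely the twists carried by the Bihom-skewsymmetry: equation $(\ref{23})$ addresses only the first entry, and the transpositions that would move $E$ elsewhere are available only after the arguments have been dressed by $\alpha$ and $\beta$. Surjectivity of $\alpha,\beta$ is what lets me strip these twists; without it the slot-mobility of $E$ is not available in general, so I would state the result in the surjective setting already used for strict product structures.

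Alternatively, one may bypass the computation entirely and argue at the level of decompositions. The Corollary characterizing strict product structures gives $L=L_+\oplus L_-$ with $L_\pm$ Bihom subalgebras and $[L_+,L_+,L_-]=[L_-,L_-,L_+]=0$ (whence, by skewsymmetry, all mixed triple brackets vanish); since $0\subseteq L_-$ and $0\subseteq L_+$, these vanishing brackets satisfy every inclusion required by the Corollary characterizing perfect product structures. Applying the sufficiency direction of that corollary to the same $E$ then shows directly that $E$ is perfect. This route is cleaner as a corollary but still rests on surjectivity through the strict-structure characterization.
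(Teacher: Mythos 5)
Your proof is correct, and it is in fact more complete than the paper's treatment: the paper states this corollary with no proof at all. Both of your routes work. The direct computation is sound: applying $E$ to the Bihom-skewsymmetry identities and using $(\ref{23})$ on the first slot does yield $[\beta(Ex),\beta(y),\alpha(z)]=[\beta(x),\beta(Ey),\alpha(z)]=[\beta(x),\beta(y),\alpha(Ez)]$, and after stripping the twists by surjectivity the chain $[Ex,Ey,Ez]=E[x,Ey,Ez]=E^2[x,y,Ez]=[x,y,Ez]=E[x,y,z]$ gives $(\ref{31})$. Your alternative route through the two decomposition corollaries (all mixed brackets vanish for a strict structure, hence trivially satisfy the inclusions characterizing perfect structures, and formula $(\ref{E defin})$ applied to the eigenspaces recovers the same $E$) is presumably what the authors intended, given where the corollary is placed. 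The most valuable part of your write-up is the point you flag explicitly: equation $(\ref{23})$ constrains only the first slot, the twisted skewsymmetry only permits moving $E$ between slots after the arguments are dressed by $\beta,\beta,\alpha$, and so surjectivity of $\alpha$ and $\beta$ is genuinely needed. The paper omits this hypothesis from the corollary's statement even though it appears in Proposition $\ref{prop1}$ and in the corollary characterizing strict product structures; your restatement in the surjective setting is the correct fix. The only cosmetic quibble is that your intermediate identity $[\beta(Ex),\beta(y),\alpha(z)]=[\beta(x),\beta(Ey),\alpha(z)]$ is obtained by one application of skewsymmetry on each side rather than by ``iterating two transpositions,'' but the cyclic identity you invoke for the third slot is valid and the argument goes through.
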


\begin{ex}\label{ex}
Let $L$ be a $3$-dimensional vector space with respect to a basis $\{e_1, e_2, e_3 \}$, the non-zero bracket and $\alpha,\beta$ be given by
\begin{equation*}
[e_1, e_2,e_3] = [e_1, e_3,e_2] = [e_2, e_3,e_1] = e_2,~[e_2, e_1,e_3] = [e_3, e_1,e_2] = [e_3, e_2,e_1] =- e_2,
\end{equation*}
$$\alpha=\rm{Id},~ \beta=\left(                 
  \begin{array}{ccc}   
    -1 & 0 & 0 \\  
    0 & 1 & 0 \\
    0 & 0 & -1
  \end{array}
\right).$$ Then $(L, [\cdot, \cdot,\cdot], \alpha,\beta)$ is a $3$-Bihom Lie algebra.
So
$ E=\left(                 
  \begin{array}{ccc}   
    -1 & 0 & 0 \\  
    0 & 1 & 0 \\
    0 & 0 & -1
  \end{array}
\right)$
is a perfect product structure and an abelian product structure, $ E=\left(                 
  \begin{array}{ccc}   
    -1 & 0 & 0 \\  
    0 & 1 & 0 \\
    0 & 0 & 1
  \end{array}
\right)$ is a strong abelian product structure and $ E=\left(                 
  \begin{array}{ccc}   
    1 & 0 & 0 \\  
    0 & 1 & 0 \\
    0 & 0 & -1
  \end{array}
\right)$ is a strong abelian product structure and a strict product structure.
\end{ex}

\begin{ex}\label{ex1}
Let $L$ be a $4$-dimensional $3$-Lie algebra with respect to a basis $\{e_1, e_2, e_3,e_4\}$ satisfied the bracket
\begin{equation*}
[e_1, e_2,e_3] = e_4,~[e_2, e_3,e_4] = e_1,~[e_1, e_3,e_4] = e_2,~[e_1, e_2,e_4] = e_3.
\end{equation*}
We can define maps $\alpha,\beta:L\rightarrow L$ by
$       
\alpha=\left(                 
  \begin{array}{cccc}   
    -1 & 0 & 0 & 0 \\  
    0 & 1 & 0 & 0 \\
    0 & 0 & 1 & 0 \\
    0 & 0 & 0 & -1
  \end{array}
\right)
$ and $       
\beta=\left(                 
  \begin{array}{cccc}   
    -1 & 0 & 0 & 0 \\  
    0 & -1 & 0 & 0 \\
    0 & 0 & 1 & 0 \\
    0 & 0 & 0 & 1
  \end{array}
\right).
$               
Then $(L, [\cdot, \cdot,\cdot], \alpha,\beta)$ is a $3$-Bihom Lie algebra. Thus
\begin{equation*}       
E=\left(                 
  \begin{array}{cccc}   
    -1 & 0 & 0 & 0\\  
    0 & -1 & 0 & 0\\
    0 & 0 & 1 & 0\\
    0 & 0 & 0 & 1
  \end{array}
\right),
E=\left(                 
  \begin{array}{cccc}   
    -1 & 0 & 0 & 0\\  
    0 & 1 & 0 & 0\\
    0 & 0 & 1 & 0\\
    0 & 0 & 0 & -1
  \end{array}
\right),
E=\left(                 
  \begin{array}{cccc}   
    -1 & 0 & 0 & 0\\  
    0 & 1 & 0 & 0\\
    0 & 0 & -1 & 0\\
    0 & 0 & 0 & 1
  \end{array}
\right)                 
\end{equation*}
are perfect and abelian product structures on $L$.
\end{ex}

\section{Complex structures on $3$-Bihom-Lie algebras}
In this section, we introduce the notion of a complex structure on a real $3$-Bihom-Lie algebra. There are also some special complex structures which parallel to the case of product structures.
\begin{defn}\label{4.1}
Let $(L, [\cdot, \cdot,\cdot],\alpha,\beta)$ be a $3$-Bihom-Lie algebra. An almost complex structure
on $L$ is a linear map $J : L\rightarrow L$ satisfying $J^2 = -\rm Id$, $J\alpha=\alpha J$ and $J\beta=\beta J$. An almost complex structure is
called a complex structure if the following condition holds$:$
\begin{align}\label{eq4.1}
J[x, y, z]=&-[Jx, Jy, Jz]+[Jx, y, z]+[x, Jy, z]+[x, y, Jz]\nonumber\\
&+J[Jx, Jy, z]+J[x, Jy, Jz]+J[Jx, y, Jz].
\end{align}
\end{defn}

\begin{re}
A complex structure $J$ on a $3$-Bihom-Lie algebra $L$ means that $J$ is a Nijenhuis operator
satisfying $J^2 = -\rm Id$.
\end{re}

\begin{re}
We can also use Definition \ref{4.1} to give the notion of a complex structure on a
complex $3$-Bihon-Lie algebra with considering $J$ to be $\mathds{C}$-linear. However, this is not very interesting. Because
for a complex $3$-Bihom-Lie algebra there is a one-to-one correspondence between $\mathds{C}$-linear complex
structures and product structures (see Proposition $\ref{4.23}$).
\end{re}

Now we consider that $L_{\mathds{C}} = L\otimes_{\mathds{R}}\mathds{C} = \{x + iy | x, y \in L\}$ is the complexification of a real $3$-Bihom-Lie algebra $L$. Obviously $(L_{\mathds{C}}, [\cdot, \cdot,\cdot]_{L_{\mathds{C}}},\alpha_{\mathds{C}},\beta_{\mathds{C}})$ is a $3$-Bihom-Lie algebra, where $[\cdot, \cdot,\cdot]_{L_{\mathds{C}}}$ is defined by extending the bracket on $L$ complex trilinearly, and $\alpha_{\mathds{C}}(x+iy)=\alpha(x)+i\alpha(y),\beta_{\mathds{C}}(x+iy)=\beta(x)+i\beta(y)$ for all $x,y\in L$. Let $\sigma$ be the conjugation map in $L_{\mathds{C}}$, that is
$
\sigma(x + iy) = x-iy, ~\forall x, y \in L.
$ Then, $\sigma$ is a complex antilinear, involutive automorphism of the
complex vector space $L_{\mathds{C}}$.

\begin{re}\label{re4.4}
Let $(L, [\cdot, \cdot,\cdot],\alpha,\beta)$ be a real $3$-Bihom-Lie algebra and $J$ be a complex structure on $L$. We extend the complex structure $J$ complex linearly, which is denoted $J_{\mathds{C}}$, i.e., $J_{\mathds{C}} : L_{\mathds{C}} \rightarrow L_{\mathds{C}}$ is defined by
\begin{equation*}
J_{\mathds{C}}(x + iy) = Jx + iJy, ~\forall x, y \in L.
\end{equation*}
Then $J_{\mathds{C}}$ is a complex linear endomorphism on $L_{\mathds{C}}$ such that  $J_{\mathds{C}}\alpha_{\mathds{C}}=\alpha_{\mathds{C}} J_{\mathds{C}}$, $J_{\mathds{C}}\beta_{\mathds{C}}=\beta_{\mathds{C}}J_{\mathds{C}}$,  $J_{\mathds{C}}^2=-{\rm {Id}}_{L_{\mathds{C}}}$ and $(\ref{eq4.1})$ holds, i.e., $J_{\mathds{C}}$ is a complex structure on $L_{\mathds{C}}$.
\end{re}

\begin{thm}\label{thm4}
Let $(L, [\cdot, \cdot,\cdot],\alpha,\beta)$ be a real $3$-Bihom-Lie algebra. Then there is a complex structure on $L$ if and only if $
L_{\mathds{C}}=Q\oplus P,
$
where $Q$ and $P=\sigma(Q)$ are Bihom subalgebras of $L_{\mathds{C}}$.
\end{thm}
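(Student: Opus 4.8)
The plan is to mirror the proof of Theorem \ref{thm1}, replacing the real eigenvalue decomposition of $E$ by the decomposition of the complex-linear extension $J_{\mathds{C}}$ into its $\pm i$-eigenspaces. For the forward direction, suppose $J$ is a complex structure on $L$ and let $J_{\mathds{C}}$ be its complex-linear extension, which by Remark \ref{re4.4} is a complex structure on $L_{\mathds{C}}$ with $J_{\mathds{C}}^2=-{\rm Id}_{L_{\mathds{C}}}$. Since the minimal polynomial of $J_{\mathds{C}}$ divides $t^2+1=(t-i)(t+i)$, the operator $J_{\mathds{C}}$ is diagonalizable and $L_{\mathds{C}}=Q\oplus P$, where $Q=\{z\in L_{\mathds{C}}\mid J_{\mathds{C}}z=iz\}$ and $P=\{z\in L_{\mathds{C}}\mid J_{\mathds{C}}z=-iz\}$.

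First I would show $Q$ is a Bihom subalgebra. For $z_1,z_2,z_3\in Q$, substituting $J_{\mathds{C}}z_k=iz_k$ into the defining identity $(\ref{eq4.1})$ and using $i^2=-1$, $i^3=-i$ gives $J_{\mathds{C}}[z_1,z_2,z_3]=4i[z_1,z_2,z_3]-3J_{\mathds{C}}[z_1,z_2,z_3]$, hence $J_{\mathds{C}}[z_1,z_2,z_3]=i[z_1,z_2,z_3]$, so $[z_1,z_2,z_3]\in Q$; this is the exact analogue of the ``$4[x_1,x_2,x_3]-3E[x_1,x_2,x_3]$'' step in Theorem \ref{thm1}. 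Invariance of $Q$ under $\alpha_{\mathds{C}},\beta_{\mathds{C}}$ is immediate from $J_{\mathds{C}}\alpha_{\mathds{C}}=\alpha_{\mathds{C}}J_{\mathds{C}}$ and $J_{\mathds{C}}\beta_{\mathds{C}}=\beta_{\mathds{C}}J_{\mathds{C}}$. To obtain $P=\sigma(Q)$ I would use that $\sigma$ is \emph{conjugate}-linear with $\sigma J_{\mathds{C}}=J_{\mathds{C}}\sigma$: if $J_{\mathds{C}}z=iz$ then $J_{\mathds{C}}\sigma(z)=\sigma(J_{\mathds{C}}z)=\sigma(iz)=-i\sigma(z)$, so $\sigma(Q)\subseteq P$, and $\sigma^2={\rm Id}$ gives the reverse inclusion. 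Since $\sigma$ is an involutive automorphism commuting with $\alpha_{\mathds{C}},\beta_{\mathds{C}}$, the space $P=\sigma(Q)$ is a Bihom subalgebra as well (equivalently, repeat the eigenvalue computation with $-i$).

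For the converse, given $L_{\mathds{C}}=Q\oplus P$ with $P=\sigma(Q)$ both Bihom subalgebras, I would define $J_{\mathds{C}}$ to be multiplication by $i$ on $Q$ and by $-i$ on $P$, so that $J_{\mathds{C}}^2=-{\rm Id}$ automatically. Checking $\sigma J_{\mathds{C}}=J_{\mathds{C}}\sigma$ separately on $Q$ and on $P$ (using $P=\sigma(Q)$) shows $J_{\mathds{C}}$ preserves the real form $L=\{z\in L_{\mathds{C}}\mid\sigma(z)=z\}$, hence restricts to a real operator $J$ with $J^2=-{\rm Id}$; moreover $\alpha_{\mathds{C}},\beta_{\mathds{C}}$ preserving $Q$ and $P$ yields $J_{\mathds{C}}\alpha_{\mathds{C}}=\alpha_{\mathds{C}}J_{\mathds{C}}$, $J_{\mathds{C}}\beta_{\mathds{C}}=\beta_{\mathds{C}}J_{\mathds{C}}$, and so $J\alpha=\alpha J$, $J\beta=\beta J$. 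It remains to verify $(\ref{eq4.1})$, which by complex multilinearity reduces to the case where each argument lies in $Q$ or in $P$. Writing $\lambda_k\in\{i,-i\}$ for the eigenvalue of the $k$-th argument, both sides of $(\ref{eq4.1})$ collapse to scalar multiples of $[z_1,z_2,z_3]$ and $J_{\mathds{C}}[z_1,z_2,z_3]$, and the identity becomes $(1-\lambda_1\lambda_2-\lambda_2\lambda_3-\lambda_1\lambda_3)\,J_{\mathds{C}}[z_1,z_2,z_3]=(\lambda_1+\lambda_2+\lambda_3-\lambda_1\lambda_2\lambda_3)\,[z_1,z_2,z_3]$.

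In the two ``pure'' cases ($\lambda_1=\lambda_2=\lambda_3$) this reads $4\,J_{\mathds{C}}[z_1,z_2,z_3]=\pm 4i\,[z_1,z_2,z_3]$, which holds precisely because $Q$ and $P$ are subalgebras; in the remaining ``mixed'' cases both scalar coefficients vanish and the identity is $0=0$, so it holds with no constraint on where the mixed bracket lands. I expect the main obstacle to be the bookkeeping in this final verification — confirming that $1-\lambda_1\lambda_2-\lambda_2\lambda_3-\lambda_1\lambda_3$ and $\lambda_1+\lambda_2+\lambda_3-\lambda_1\lambda_2\lambda_3$ vanish simultaneously in every mixed case — together with the care needed to keep $\sigma$ conjugate-linear throughout, so that it sends $i$-eigenvectors to $(-i)$-eigenvectors and $J_{\mathds{C}}$ descends correctly to the real form $L$.
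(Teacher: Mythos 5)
Your proof is correct and follows essentially the same route as the paper: extend $J$ to $J_{\mathds{C}}$, take the $\pm i$-eigenspace decomposition for the forward direction, and in the converse define $J_{\mathds{C}}$ as $\pm i$ on $Q$ and $P=\sigma(Q)$ and descend to the real form via $\sigma J_{\mathds{C}}=J_{\mathds{C}}\sigma$. Your eigenvalue bookkeeping for $(\ref{eq4.1})$ in the converse is a welcome addition, since the paper only asserts that step without computation, and your check that the mixed-eigenvalue cases reduce to $0=0$ is exactly what makes the claim work.
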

\begin{proof}
Suppose $J$ is a complex structure on $L$. By Remark \ref{re4.4} $J_{\mathds{C}}$ is a complex structure on $L_{\mathds{C}}$.
Denote by $L_{\pm i}$ the corresponding eigenspaces of $L_{\mathds{C}}$ associated to the eigenvalues $\pm i$
and so $L_{\mathds{C}} = L_i \oplus L_{-i}$. We can also get
\begin{align*}
L_i &= \{x\in L_{\mathds{C}}|J_{\mathds{C}}(x)=ix\}=\{x-iJx |x\in L\},\\
L_{-i}&= \{x\in L_{\mathds{C}}|J_{\mathds{C}}(x)=ix\} = \{x + iJx |x\in L\}.
\end{align*}
So we have $L_{-i}=\sigma(L_i)$, $\alpha_{\mathds{C}}(L_i)\subseteq L_i$, $\beta_{\mathds{C}}(L_i)\subseteq L_i$, $\alpha_{\mathds{C}}(L_{-i})\subseteq L_{-i}$ and $\beta_{\mathds{C}}(L_{-i})\subseteq L_{-i}$.
Next, for all $X, Y, Z \in L_i$, we
have
\begin{align*}
J_{\mathds{C}}[X, Y, Z]_{L_{\mathds{C}}} =& -[J_{\mathds{C}}X, J_{\mathds{C}}Y, J_{\mathds{C}}Z]_{L_{\mathds{C}}}+ [J_{\mathds{C}}X, Y, Z]_{L_{\mathds{C}}}+[X, J_{\mathds{C}}Y, Z]_{L_{\mathds{C}}}+[X, Y, J_{\mathds{C}}Z]_{L_{\mathds{C}}}\\
&+J_{\mathds{C}}[J_{\mathds{C}}X, J_{\mathds{C}}Y, Z]_{L_{\mathds{C}}}+ J_{\mathds{C}}[X, J_{\mathds{C}}Y, J_{\mathds{C}}Z]_{L_{\mathds{C}}}+ J_{\mathds{C}}[J_{\mathds{C}}X, Y, J_{\mathds{C}}Z]_{L_{\mathds{C}}}\\
=&4i[X, Y, Z]_{L_{\mathds{C}}}-3J_{\mathds{C}}[X, Y, Z]_{L_{\mathds{C}}}.
\end{align*}
Thus, $[X, Y, Z]_{L_{\mathds{C}}} \in L_i$, which implies $L_i$ is a Bihom subalgebra. Similarly $L_{-i}$ is also a Bihom subalgebra.

Conversely, we define a complex map $J_{\mathds{C}} : L_{\mathds{C}}\rightarrow L_{\mathds{C}}$ by
\begin{equation}\label{jc}
J_{\mathds{C}}(X +\sigma(Y )) = iX-i\sigma(Y ), ~~\forall X, Y\in Q.
\end{equation}
Since $\sigma$ is a complex antilinear and involutive automorphism of $L_{\mathds{C}}$, we have
\begin{align*}
J_{\mathds{C}}^2(X+\sigma(Y )) = J_{\mathds{C}}(iX - i\sigma(Y )) = J_{\mathds{C}}(iX+ \sigma(iY )) = i(iX) - i\sigma(iY ) = -X- \sigma(Y ),
\end{align*}
i.e., $J_{\mathds{C}}^2=-\rm{Id}$. And \begin{align*}\alpha_{\mathds{C}}J_{\mathds{C}}(X+\sigma(Y ))&=\alpha_{\mathds{C}}(iX-i\sigma(Y))=i\alpha_{\mathds{C}}(X)-i\alpha_{\mathds{C}}(\sigma(Y))
=J_{\mathds{C}}(\alpha_{\mathds{C}}(X)+\alpha_{\mathds{C}}(\sigma(Y)))\\
&=J_{\mathds{C}}\alpha_{\mathds{C}}(X+\sigma(Y )),
\end{align*}
i.e., $J_{\mathds{C}}\alpha_{\mathds{C}}=\alpha_{\mathds{C}}J_{\mathds{C}}$. Similarly, $J_{\mathds{C}}\beta_{\mathds{C}}=\beta_{\mathds{C}}J_{\mathds{C}}$.
Further we can show that $J_{\mathds{C}}$ satisfies $(\ref{eq4.1})$. Since $L_{\mathds{C}}=Q\oplus P$, for all $X\in Q$ we have
\begin{align*}
J_{\mathds{C}}\sigma(X+\sigma(Y ))=J_{\mathds{C}}(Y+\sigma(X))=iY-i\sigma(X)
=\sigma(iX-i\sigma(Y))=\sigma J_{\mathds{C}}(X+\sigma(Y )),
\end{align*}
which implies that $J_{\mathds{C}}\sigma= \sigma J_{\mathds{C}}$. Moreover, since $\sigma(X ) = X$ is equivalent to $X \in L$, we know
that the set of fixed points of $\sigma$ is the real vector space $L$. By $J_{\mathds{C}}\sigma= \sigma J_{\mathds{C}}$, there is a well-defined
$J \in \rm{End}(L)$ given by
$J\triangleq J_{\mathds{C}}|_L$.
Because, $J_{\mathds{C}}^2=-\rm{Id}$, $J_{\mathds{C}}\alpha_{\mathds{C}}=\alpha_{\mathds{C}}J_{\mathds{C}}$, $J_{\mathds{C}}\beta_{\mathds{C}}=\beta_{\mathds{C}}J_{\mathds{C}}$ and $J_{\mathds{C}}$ satisfies $(\ref{eq4.1})$, $J$ is a complex structure on $L$.
\end{proof}

\begin{prop}\label{prop4.6}
Let $J$ be an almost complex structure on a real $3$-Bihom-Lie algebra $(L, [\cdot, \cdot,\cdot],\alpha,\beta)$ .
If $J$ satisfies
\begin{equation}\label{32}
J[x, y,z] = [Jx, y,z], \forall x,y,z\in L,
\end{equation}
then $J$ is a complex structure on $L$.
\end{prop}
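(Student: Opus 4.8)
The plan is to verify the defining identity (\ref{eq4.1}) by a direct computation, substituting the hypothesis (\ref{32}) into its right-hand side, in exact parallel with the proof of Proposition \ref{prop1} for strict product structures. The only inputs I expect to need are the relation (\ref{32}) itself and the identity $J^2 = -\mathrm{Id}$ coming from $J$ being an almost complex structure; the Bihom skew-symmetry and Jacobi axioms, as well as the compatibility of $J$ with $\alpha,\beta$, play no role in this particular verification.

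First I would rewrite each of the seven terms on the right-hand side of (\ref{eq4.1}) so that any outer $J$ is pulled into the first bracket slot. Concretely, reading (\ref{32}) backwards converts $[Jx,y,z]$ into $J[x,y,z]$, while reading it forwards converts the three terms $J[Jx,Jy,z]$, $J[x,Jy,Jz]$, $J[Jx,y,Jz]$ into $[J^2x,Jy,z]$, $[Jx,Jy,Jz]$, $[J^2x,y,Jz]$ respectively. Next I would apply $J^2=-\mathrm{Id}$ to the two terms now carrying $J^2$ in the first slot, turning $[J^2x,Jy,z]$ into $-[x,Jy,z]$ and $[J^2x,y,Jz]$ into $-[x,y,Jz]$. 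After these replacements the right-hand side is a sum of seven terms in which $-[Jx,Jy,Jz]$ cancels the newly produced $+[Jx,Jy,Jz]$, and the terms $+[x,Jy,z]$ and $+[x,y,Jz]$ cancel the two terms $-[x,Jy,z]$ and $-[x,y,Jz]$ manufactured by $J^2=-\mathrm{Id}$. The single surviving term is $J[x,y,z]$, which is precisely the left-hand side of (\ref{eq4.1}); hence the identity holds and $J$ is a complex structure.

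The one point that requires care — and the reason the statement is not merely a transcription of Proposition \ref{prop1} — is the sign bookkeeping. Relative to the product identity (\ref{21}), the complex identity (\ref{eq4.1}) carries an extra minus sign on the $[Jx,Jy,Jz]$ term and plus signs, rather than minus signs, on the three $J[\cdots]$ terms. I expect these sign changes to be compensated exactly by the passage from $E^2=\mathrm{Id}$ to $J^2=-\mathrm{Id}$, so that the same three-way cancellation goes through unchanged; confirming that the signs line up as claimed is the main thing to check, and it is the only place where the argument could break if one were careless.
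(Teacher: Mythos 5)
Your proposal is correct and coincides with the paper's own proof: both substitute (\ref{32}) into the right-hand side of (\ref{eq4.1}), turning $[Jx,y,z]$ into $J[x,y,z]$ and the three $J[\cdots]$ terms into brackets with $J^2$ in the first slot, after which $J^2=-\mathrm{Id}$ produces exactly the three cancellations you describe. The sign bookkeeping you flag as the main point of care indeed works out as you expect.
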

\begin{proof}
 Since $J^2=-\rm{Id}$ and $(\ref{32})$, we have
 \begin{align*}
 &-[Jx, Jy, Jz]+[Jx, y, z]+[x, Jy, z]+[x, y, Jz]\\
 &+J[Jx, Jy, z]+J[x, Jy, Jz]+J[Jx, y, Jz]\\
=& -[Jx, Jy, Jz]+J[x, y, z]+[x, Jy, z]+[x, y, Jz]\\
&+[J^2x, Jy, z]+[Jx, Jy, Jz]+[J^2x, y, Jz]\\
= &J[x, y, z].
 \end{align*}
Thus, $J$ is a complex structure on $L$.
\end{proof}

\begin{defn}
An almost complex structure on a real $3$-Bihom-Lie algebra $(L, [\cdot, \cdot,\cdot],\alpha,\beta)$ is called
a strict complex structure if $(\ref{32})$ holds.
\end{defn}

\begin{cor}
Let $(L, [\cdot, \cdot,\cdot],\alpha,\beta)$ a real $3$-Bihom-Lie algebra with $\alpha,\beta$ surjective. Then there is a strict complex
structure on $L$ if and only if $
L_{\mathds{C}}=Q\oplus P,
$
where $Q$ and $P=\sigma(Q)$ are Bihom subalgebras of $L_{\mathds{C}}$ such that  $[Q, Q, P]_{L_{\mathds{C}}} = 0$ and $[P,P, Q]_{L_{\mathds{C}}} = 0$.
\end{cor}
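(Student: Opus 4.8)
The plan is to mirror the proof of the strict product-structure corollary, transporting everything to the complexification $L_{\mathds{C}}$ and using Proposition \ref{prop4.6} and Theorem \ref{thm4} as the backbone. The surjectivity of $\alpha,\beta$ will play the same role it played in Proposition \ref{prop1}.

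For the forward direction, suppose $J$ is a strict complex structure, so $J$ is an almost complex structure satisfying $(\ref{32})$. By Proposition \ref{prop4.6}, $J$ is a complex structure, and Theorem \ref{thm4} then gives $L_{\mathds{C}} = L_i \oplus L_{-i}$ with $Q = L_i$ and $P = L_{-i} = \sigma(Q)$ Bihom subalgebras. It remains to establish the two vanishing conditions. First I would extend $(\ref{32})$ complex linearly to $J_{\mathds{C}}[X,Y,Z]_{L_{\mathds{C}}} = [J_{\mathds{C}}X, Y, Z]_{L_{\mathds{C}}}$ for all $X,Y,Z \in L_{\mathds{C}}$. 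Then for $X_1, X_2 \in Q$ and $X_3 \in P$, computing $J_{\mathds{C}}[X_1,X_2,X_3]_{L_{\mathds{C}}}$ directly via $J_{\mathds{C}}X_1 = iX_1$ gives $i[X_1,X_2,X_3]_{L_{\mathds{C}}}$. For a second evaluation I would use the surjectivity of $\alpha_{\mathds{C}},\beta_{\mathds{C}}$ (inherited from $\alpha,\beta$, and compatible with the decomposition since $\alpha_{\mathds{C}},\beta_{\mathds{C}}$ preserve $L_{\pm i}$) to write $X_1 = \beta_{\mathds{C}}(\tilde X_1)$, $X_2 = \beta_{\mathds{C}}(\tilde X_2)$, $X_3 = \alpha_{\mathds{C}}(\tilde X_3)$ with $\tilde X_1, \tilde X_2 \in Q$, $\tilde X_3 \in P$; the Bihom-skewsymmetry (which, under the $\beta,\beta,\alpha$ twisting, amounts to total antisymmetry and hence cyclic invariance of $[\beta_{\mathds{C}}(\cdot),\beta_{\mathds{C}}(\cdot),\alpha_{\mathds{C}}(\cdot)]$, exactly as in Proposition \ref{prop1}) moves $X_3$ into the first slot, and applying the extended strictness relation together with $J_{\mathds{C}}\beta_{\mathds{C}} = \beta_{\mathds{C}}J_{\mathds{C}}$ and $J_{\mathds{C}}\tilde X_3 = -i\tilde X_3$ produces $-i[X_1,X_2,X_3]_{L_{\mathds{C}}}$. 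Comparing the two evaluations forces $[X_1,X_2,X_3]_{L_{\mathds{C}}} = 0$, i.e. $[Q,Q,P]_{L_{\mathds{C}}} = 0$; the identity $[P,P,Q]_{L_{\mathds{C}}} = 0$ follows symmetrically.

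For the converse, I would define $J_{\mathds{C}}$ by $(\ref{jc})$, so that $J_{\mathds{C}}$ acts as $i\,\mathrm{Id}$ on $Q$ and $-i\,\mathrm{Id}$ on $P$; by the construction in the proof of Theorem \ref{thm4}, the restriction $J = J_{\mathds{C}}|_L$ is an almost complex structure on $L$. The key preliminary step is to observe that, because $\alpha_{\mathds{C}},\beta_{\mathds{C}}$ are surjective, the two given vanishing conditions propagate under Bihom-skewsymmetry to kill every bracket with mixed $Q$/$P$ inputs: all brackets with two entries in one summand and one in the other vanish, so only $[Q,Q,Q]_{L_{\mathds{C}}} \subseteq Q$ and $[P,P,P]_{L_{\mathds{C}}} \subseteq P$ survive. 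Decomposing arbitrary $X,Y,Z \in L_{\mathds{C}}$ into their $Q$- and $P$-components $X = X_Q + X_P$, etc., and using multilinearity, both $J_{\mathds{C}}[X,Y,Z]_{L_{\mathds{C}}}$ and $[J_{\mathds{C}}X, Y, Z]_{L_{\mathds{C}}}$ collapse to $i[X_Q,Y_Q,Z_Q]_{L_{\mathds{C}}} - i[X_P,Y_P,Z_P]_{L_{\mathds{C}}}$; hence the complexified strictness relation holds, and restricting to the fixed-point set $L$ of $\sigma$ yields $(\ref{32})$ for $J$. By Proposition \ref{prop4.6}, $J$ is then a strict complex structure on $L$.

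The main obstacle I anticipate is the bookkeeping around surjectivity: I must ensure that the preimages under $\alpha_{\mathds{C}},\beta_{\mathds{C}}$ can be chosen inside the correct eigenspace $Q$ or $P$, which uses that these maps are bijective and preserve the decomposition, and that the Bihom-skewsymmetry genuinely yields full antisymmetry (and thus cyclic invariance) of the twisted bracket. Once these structural facts are in hand, everything else reduces to the routine multilinear expansion already carried out in the product-structure case.
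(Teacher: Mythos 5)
Your proposal is correct and follows essentially the same route as the paper: the forward direction combines Proposition \ref{prop4.6} and Theorem \ref{thm4} with the two evaluations of $J_{\mathds{C}}[X,Y,Z]_{L_{\mathds{C}}}$ (one direct, one via $\beta_{\mathds{C}},\beta_{\mathds{C}},\alpha_{\mathds{C}}$-preimages and Bihom-skewsymmetry) to force the mixed brackets to vanish, and the converse defines $J_{\mathds{C}}$ by $(\ref{jc})$, kills all mixed brackets using surjectivity, and restricts to $L$. Your explicit remark that preimages can be chosen inside the correct eigenspaces because $\alpha_{\mathds{C}},\beta_{\mathds{C}}$ preserve $L_{\pm i}$ is a point the paper leaves implicit, but it is not a different argument.
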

\begin{proof}
Let $J$ be a strict complex structure on $L$. Then, $J_{\mathds{C}}$ is a strict
complex structure on the complex $3$-Bihom-Lie algebra $(L_{\mathds{C}}, [\cdot, \cdot,\cdot]_{L_{\mathds{C}}},\alpha_{\mathds{C}},\beta_{\mathds{C}})$. By Proposition \ref{prop4.6} and Theorem \ref{thm4}, we only need to show that $[L_i, L_i, L_{-i}]_{L_{\mathds{C}}} = 0$ and $[L_{-i},L_{-i}, L_i]_{L_{\mathds{C}}} = 0$. For all $X, Y \in L_i$ and $Z \in L_{-i}$,
on one hand we have
$$J_{\mathds{C}}[X, Y, Z]_{L_{\mathds{C}}} = [J_{\mathds{C}}X, Y, Z]_{L_{\mathds{C}}} = i[X, Y, Z]_{L_{\mathds{C}}}.$$
On the other hand, since $\alpha,\beta$ are surjective, we have $\alpha_{\mathds{C}},\beta_{\mathds{C}}$ are surjective. So there are $\tilde{X},\tilde{Y}\in L_i$ and $\tilde{Z}\in L_{-i}$ such that $X=\beta_{\mathds{C}}(\tilde{X}),Y=\beta_{\mathds{C}}(\tilde{Y})$ and $Z=\alpha_{\mathds{C}}(\tilde{Z})$. We can get
 \begin{align*}
 J_{\mathds{C}}[X, Y, Z]_{L_{\mathds{C}}}
 &= J_{\mathds{C}}[\beta_{L_{\mathds{C}}}(\tilde{X}), \beta_{L_{\mathds{C}}}(\tilde{Y}), \alpha_{L_{\mathds{C}}}(\tilde{Z})]_{L_{\mathds{C}}}
 =
 J_{\mathds{C}}[\beta_{L_{\mathds{C}}}(\tilde{Z}), \beta_{L_{\mathds{C}}}(\tilde{X}), \alpha_{L_{\mathds{C}}}(\tilde{Y})]_{L_{\mathds{C}}}\\
 &=
 [J_{\mathds{C}}\beta_{L_{\mathds{C}}}(\tilde{Z}), \beta_{L_{\mathds{C}}}(\tilde{X}), \alpha_{L_{\mathds{C}}}(\tilde{Y})]_{L_{\mathds{C}}}
 = -i[\beta_{L_{\mathds{C}}}(\tilde{Z}), \beta_{L_{\mathds{C}}}(\tilde{X}), \alpha_{L_{\mathds{C}}}(\tilde{Y})]_{L_{\mathds{C}}}\\
 &=
 -i[\beta_{L_{\mathds{C}}}(\tilde{X}), \beta_{L_{\mathds{C}}}(\tilde{Y}), \alpha_{L_{\mathds{C}}}(\tilde{Z})]_{L_{\mathds{C}}}
 =
 -i[X, Y, Z]_{L_{\mathds{C}}}.
  \end{align*}
  Thus, we obtain $[L_i, L_i, L_{-i}]_{L_{\mathds{C}}} = 0$. Similarly, $[L_{-i},L_{-i}, L_i]_{L_{\mathds{C}}} = 0$

  Conversely, we define a complex linear endomorphism $J_{\mathds{C}} : L_{\mathds{C}}\rightarrow L_{\mathds{C}}$ by $(\ref{jc})$. By the proof of Theorem \ref{thm4}, $J_{\mathds{C}}$ is an almost complex structure on $L_{\mathds{C}}$. Because $\alpha_{\mathds{C}},\beta_{\mathds{C}}$ are surjective and $[Q, Q, P]_{L_{\mathds{C}}} =[P, P, Q]_{L_{\mathds{C}}} = 0 $, we also have $[Q, P, Q]_{L_{\mathds{C}}} = [P, Q, Q]_{L_{\mathds{C}}} = [P, Q, P]_{L_{\mathds{C}}} = [Q, P, P]_{L_{\mathds{C}}} = 0$. Thus for all $X_1,X_2,X_3\in L_i$, $Y_1,Y_2,Y_3\in L_{-i}$, we can show that
  \begin{align*}
 &J_{\mathds{C}}[X_1+Y_1, X_2+Y_2, X_3+Y_3]_{L_{\mathds{C}}}\\
 =&J_{\mathds{C}}([X_1, X_2, X_3]_{L_{\mathds{C}}}+[Y_1, Y_2, Y_3]_{L_{\mathds{C}}})
 =i([X_1, X_2, X_3]_{L_{\mathds{C}}}-[Y_1, Y_2, Y_3]_{L_{\mathds{C}}})\\
 =&([X_1-Y_1, X_2+Y_2, X_3+Y_3]_{L_{\mathds{C}}})
 =[iX_1-iY_1, X_2+Y_2, X_3+Y_3]_{L_{\mathds{C}}}\\
 =&[J_{\mathds{C}}(X_1+Y_1), X_2+Y_2, X_3+Y_3]_{L_{\mathds{C}}}.
  \end{align*}
  By the proof of Theorem \ref{thm4}, we obtain that $J\triangleq J_{\mathds{C}}|_L$ is a strict complex
structure on the real $3$-Bihom-Lie algebra $(L, [\cdot, \cdot,\cdot],\alpha,\beta)$. The proof is finished.
\end{proof}

Let $J$ be an almost complex structure on a real $3$-Bihom-Lie algebra $(L, [\cdot, \cdot,\cdot],\alpha,\beta)$. We can define a
complex vector space structure on the real vector space $L$ by
\begin{equation}\label{complex}
(a + bi)x\triangleq ax + bJx, \forall a, b \in \mathds{R}, x \in L.
\end{equation}
Define two maps $\phi : L \rightarrow L_i$ and $\psi : L \rightarrow L_{-i}$ given by
$\phi(x) = \frac{1}{2}(x-iJx),
\psi(x) = \frac{1}{2}(x +iJx)$.
Clearly, $\phi$ is complex linear isomorphism and $\psi = \sigma \phi$ is a complex
antilinear isomorphism of the complex vector spaces $L$.

Let $J$ be a strict complex structure on a real $3$-Bihom-Lie algebra $(L, [\cdot, \cdot,\cdot],\alpha,\beta)$ with $\alpha,\beta$ surjective. Then with the complex
vector space structure defined above, $(L, [\cdot, \cdot,\cdot],\alpha,\beta)$ is a complex $3$-Bihom-Lie algebra. In fact, using $(\ref{32})$ and $(\ref{complex})$ we can obtain
 \begin{align*}
[(a+bi)x, y, z] &= [ax+bJx, y, z]= a[x, y, z]+b[Jx, y, z]\\
&= a[x, y, z]+bJ[x, y, z] = (a+ bi)[x, y, z].
 \end{align*}
Since $\alpha,\beta$ are surjective, the $[\cdot, \cdot,\cdot]$ is complex trilinear.

Let $J$ be a complex structure on $L$. Define a new bracket $[\cdot, \cdot, \cdot]_J : \wedge^3L \rightarrow L$by
\begin{equation}\label{J}
[x, y, z]_J=\frac{1}{4}
([x, y, z]-[x, Jy, Jz]-[Jx, y, Jz]- [Jx, Jy, z]), \forall x, y, z \in L.
\end{equation}

\begin{prop}\label{prop4.9}
Let $(L, [\cdot, \cdot,\cdot],\alpha,\beta)$ be a real $3$-Bihom-Lie algebra and $J$ be a complex structure on $L$. Then $(L, [\cdot, \cdot,\cdot]_J,\alpha,\beta)$
is a real $3$-Bihom-Lie algebra. Moreover, $J$ is a strict complex structure on $(L, [\cdot, \cdot,\cdot]_J,\alpha,\beta)$. And when $\alpha,\beta$ are surjective, the corresponding
complex $3$-Bihom-Lie algebra $(L, [\cdot, \cdot,\cdot]_J,\alpha,\beta)$ is isomorphic to the complex $3$-Bihom-Lie algebra $L_i$.
\end{prop}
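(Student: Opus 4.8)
The plan is to route the entire argument through the complex linear isomorphism $\phi : L \to L_i$, $\phi(x) = \frac{1}{2}(x - iJx)$, introduced just before the statement, showing that $\phi$ identifies the new structure $(L, [\cdot,\cdot,\cdot]_J, \alpha, \beta)$ with the Bihom subalgebra $L_i$ of $L_{\mathds{C}}$. First I would record the elementary intertwining relations: from $J^2 = -\mathrm{Id}$ one gets $\phi(Jx) = i\phi(x)$, so that $\phi$ is complex linear for the complex structure $(\ref{complex})$ determined by $J$; and from $J\alpha = \alpha J$, $J\beta = \beta J$ one gets $\phi\alpha = \alpha_{\mathds{C}}\phi$ and $\phi\beta = \beta_{\mathds{C}}\phi$. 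These are immediate.

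The heart of the proof is the single identity
\[
\phi([x,y,z]_J) = [\phi(x),\phi(y),\phi(z)]_{L_{\mathds{C}}}, \qquad \forall\, x,y,z\in L .
\]
To establish it I would expand the right-hand side trilinearly. It splits into a real part, which by $(\ref{J})$ equals $\frac{1}{2}[x,y,z]_J$, and an imaginary part proportional to $[Jx,y,z]+[x,Jy,z]+[x,y,Jz]-[Jx,Jy,Jz]$. The crux is to show this last combination equals $4J[x,y,z]_J$: one substitutes the complex structure equation $(\ref{eq4.1})$ to rewrite it as $J[x,y,z]-J([Jx,Jy,z]+[x,Jy,Jz]+[Jx,y,Jz])$, and then uses $(\ref{J})$ again in the form $[Jx,Jy,z]+[x,Jy,Jz]+[Jx,y,Jz]=[x,y,z]-4[x,y,z]_J$ to collapse everything to $4J[x,y,z]_J$. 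This yields $[\phi(x),\phi(y),\phi(z)]_{L_{\mathds{C}}}=\frac{1}{2}([x,y,z]_J-iJ[x,y,z]_J)=\phi([x,y,z]_J)$. I expect this bookkeeping with $(\ref{eq4.1})$ to be the only genuine obstacle; everything else is formal.

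With the intertwining identity in hand the three assertions follow quickly. Since $L_i$ is a Bihom subalgebra of $L_{\mathds{C}}$ by Theorem \ref{thm4}, and $\phi$ is an $\mathds{R}$-linear bijection carrying $[\cdot,\cdot,\cdot]_J$, $\alpha$, $\beta$ to $[\cdot,\cdot,\cdot]_{L_{\mathds{C}}}$, $\alpha_{\mathds{C}}$, $\beta_{\mathds{C}}$, the morphism property, the Bihom-skewsymmetry and the $3$-Bihom-Jacobi identity transport from $L_i$ back to $(L, [\cdot,\cdot,\cdot]_J, \alpha, \beta)$, which proves the first claim (note no surjectivity is needed here, as $\phi$ is a bijection built from $J$ alone). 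For the second claim, $J$ is still an almost complex structure on the new algebra, since only $J,\alpha,\beta$ are involved and none of them has changed; and strictness $(\ref{32})$ follows by applying $\phi$: using $\phi(Jx)=i\phi(x)$ and the intertwining identity, $\phi([Jx,y,z]_J)=[i\phi(x),\phi(y),\phi(z)]_{L_{\mathds{C}}}=i\phi([x,y,z]_J)=\phi(J[x,y,z]_J)$, so injectivity of $\phi$ gives $[Jx,y,z]_J=J[x,y,z]_J$. Finally, when $\alpha,\beta$ are surjective, strictness makes $[\cdot,\cdot,\cdot]_J$ complex trilinear for the structure $(\ref{complex})$ exactly as in the discussion preceding $(\ref{J})$, so $(L, [\cdot,\cdot,\cdot]_J, \alpha, \beta)$ is a complex $3$-Bihom-Lie algebra, and the same $\phi$ — now a $\mathds{C}$-linear bijection intertwining the bracket and both twisting maps — is the required isomorphism onto $L_i$.
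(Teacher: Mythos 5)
Your proposal is correct and follows essentially the same route as the paper: both hinge on the identity $\phi([x,y,z]_J)=[\phi(x),\phi(y),\phi(z)]_{L_{\mathds{C}}}$, proved by expanding trilinearly and using $(\ref{eq4.1})$ to identify the imaginary part with $J$ applied to the real part, and then transport the $3$-Bihom-Lie structure from the subalgebra $L_i$ back through $\phi$. The only (harmless) variation is that you deduce strictness $[Jx,y,z]_J=J[x,y,z]_J$ from the intertwining identity and injectivity of $\phi$, whereas the paper verifies it by a direct computation with $(\ref{eq4.1})$; the two arguments are equivalent.
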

\begin{proof}
First we can show that $(L, [\cdot, \cdot,\cdot]_J,\alpha,\beta)$ is a real $3$-Bihom-Lie algebra. By $(\ref{eq4.1})$, for all $x, y, z \in L$, we have
\begin{align}\label{fai}
&[\phi(x), \phi(y), \phi(z)]_{L_{\mathds{C}}} \nonumber\\
=& \frac{1
}{8}
[x -iJx, y-iJy, z-iJz]_{L_{\mathds{C}}}\nonumber\\
=&\frac{1}{8}
([x, y, z]-[x, Jy, Jz]-[Jx, y, Jz]- [Jx, Jy, z])-
\frac{1}{8}
i([x, y, Jz]+[x, Jy, z]\nonumber\\
&+[Jx, y, z]- [Jx, Jy, Jz])\nonumber\\
=&\frac{1}{8}
([x, y, z]-[x, Jy, Jz]-[Jx, y, Jz]- [Jx, Jy, z])
-
\frac{1}{8}
iJ([x, y, z]-[x, Jy, Jz]\nonumber\\
&-[Jx, y, Jz]-[Jx, Jy, z])\nonumber\\
=&\frac{1}{2}[x, y, z]_J-\frac{1}{2}iJ[x, y, z]_J\nonumber\\
=& \phi[x, y, z]_J.
\end{align}
Thus, we have $[x, y, z]_J = \varphi^{-1}[\varphi(x), \varphi(y), \varphi(z)]_{L_{\mathds{C}}}$.
In addition, we can also get $\varphi\beta=\beta_{\mathds{C}}\varphi$ and $\varphi\alpha=\alpha_{\mathds{C}}\varphi$. Since $J$ is a complex structure and by Theorem \ref{thm4}, $L_i$ is a $3$-Bihom-Lie
subalgebra. Therefore, $(L, [\cdot, \cdot,\cdot]_J,\alpha,\beta)$ is a real $3$-Bihom-Lie algebra.

Using $(\ref{eq4.1})$, for all $x, y, z \in L$, we have
\begin{align*}
J[x, y, z]_J &= \frac{1
}{4}
J([x, y, z]-[x, Jy, Jz]-[Jx, y, Jz]-[Jx, Jy, z])\\
&=
\frac{1}{4}
(-[Jx, Jy, Jz]+[Jx, y, z]+[x, Jy, z]+[x, y, Jz])\\
&= [Jx, y, z]_J,
\end{align*}
which implies that $J$ is a strict complex structure on $(L, [\cdot, \cdot,\cdot]_J,\alpha,\beta)$.

Since $\alpha,\beta$ are surjective and use the same way as above, we can obtain $(L, [\cdot, \cdot,\cdot]_J,\alpha,\beta)$ is a complex $3$-Bihom-Lie algebra. By $(\ref{fai})$, $\varphi\beta=\beta_{\mathds{C}}\varphi$ and $\varphi\alpha=\alpha_{\mathds{C}}\varphi$, we have $\varphi$ is a complex $3$-Bihom-Lie
algebra isomorphism. The proof is finished.
\end{proof}

\begin{prop}
Let $(L, [\cdot, \cdot,\cdot],\alpha,\beta)$ be a real $3$-Bihom-Lie algebra with $\alpha,\beta$ surjective and $J$ be a complex structure on $L$. Then $J$ is a
strict complex structure on $(L, [\cdot, \cdot,\cdot],\alpha,\beta)$ if and only if $[\cdot, \cdot,\cdot]_J = [\cdot, \cdot,\cdot]$.
\end{prop}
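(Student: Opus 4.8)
The plan is to prove the two implications separately, and I expect the forward direction to carry essentially all of the work while the converse follows almost immediately from Proposition~\ref{prop4.9}.

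For the converse, suppose $[\cdot,\cdot,\cdot]_J=[\cdot,\cdot,\cdot]$. By Proposition~\ref{prop4.9}, $J$ is a strict complex structure on $(L,[\cdot,\cdot,\cdot]_J,\alpha,\beta)$; that is, $J[x,y,z]_J=[Jx,y,z]_J$ for all $x,y,z\in L$. Since the two brackets coincide by hypothesis, substituting gives $J[x,y,z]=[Jx,y,z]$, which is exactly condition~$(\ref{32})$ for the original bracket, so $J$ is a strict complex structure on $(L,[\cdot,\cdot,\cdot],\alpha,\beta)$. I note that the strictness assertion in Proposition~\ref{prop4.9} relies only on~$(\ref{eq4.1})$ and $J^2=-\mathrm{Id}$, not on surjectivity, so this implication costs nothing beyond invoking the earlier result.

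For the forward direction, suppose $J$ is a strict complex structure, so $J[x,y,z]=[Jx,y,z]$. The key preliminary step, and what I expect to be the main obstacle, is to show that $J$ can be slid into any of the three entries, namely
\[
J[x,y,z]=[Jx,y,z]=[x,Jy,z]=[x,y,Jz].
\]
This is where surjectivity of $\alpha,\beta$ and Bihom-skewsymmetry are used: given $x,y,z$, I would write $x=\beta(x')$, $y=\beta(y')$, $z=\alpha(z')$, apply skewsymmetry to bring the target entry into the first slot, use~$(\ref{32})$ there together with $J\alpha=\alpha J$ and $J\beta=\beta J$, and then apply skewsymmetry once more to restore the original arrangement. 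This is precisely the manipulation already carried out in the proof of the strict complex structure corollary, so I would mirror that argument.

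Once the three sliding identities are in hand, the rest is a short formal computation. Using $J^2=-\mathrm{Id}$ one gets $[x,Jy,Jz]=J[x,y,Jz]=J^2[x,y,z]=-[x,y,z]$, and in the same way $[Jx,y,Jz]=-[x,y,z]$ and $[Jx,Jy,z]=-[x,y,z]$. Substituting these into the definition~$(\ref{J})$ yields
\[
[x,y,z]_J=\frac14\bigl([x,y,z]+[x,y,z]+[x,y,z]+[x,y,z]\bigr)=[x,y,z],
\]
so $[\cdot,\cdot,\cdot]_J=[\cdot,\cdot,\cdot]$, completing this direction. The only delicate point is the sliding lemma; everything after it is routine.
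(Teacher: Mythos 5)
Your proof is correct. The forward direction follows the paper's route exactly: establish the sliding identities $J[x,y,z]=[Jx,y,z]=[x,Jy,z]=[x,y,Jz]$ via surjectivity of $\alpha,\beta$ and Bihom-skewsymmetry, then substitute into $(\ref{J})$ to get $[x,y,z]_J=\frac14\cdot 4[x,y,z]$. In fact you supply more detail than the paper, which simply asserts the sliding identities without proof; your outlined permutation argument (pull the target entry to the first slot, apply $(\ref{32})$ together with $J\beta=\beta J$, $J\alpha=\alpha J$, permute back) is the correct way to justify them and mirrors the analogous step in Proposition \ref{prop1}. Where you genuinely diverge is the converse: the paper rederives everything by hand, first extracting $[x,Jy,Jz]+[Jx,y,Jz]+[Jx,Jy,z]=-3[x,y,z]$ from the hypothesis and then combining it with $(\ref{eq4.1})$ to reach $4J[x,y,z]=4[Jx,y,z]$, whereas you simply chain $J[x,y,z]=J[x,y,z]_J=[Jx,y,z]_J=[Jx,y,z]$ using the already-proved strictness claim of Proposition \ref{prop4.9}. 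Your version is shorter and correctly observes that this part of Proposition \ref{prop4.9} needs only $(\ref{eq4.1})$ and $J^2=-\mathrm{Id}$, not surjectivity; the paper's version is self-contained but amounts to recomputing the same identity. The only cosmetic quibble is your framing that the converse ``costs nothing'': it costs nothing only because Proposition \ref{prop4.9} already paid for it.
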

\begin{proof}
Because $J$ be a strict complex structure on $L$ and $\alpha,\beta$ are surjective, we have $J[x, y, z] = [Jx, y, z]=[x, Jy, z]=[x, y, Jz]$. So for all $x,y,x\in L$ we have \begin{align*}
[x, y, z]_J &= \frac{1
}{4}
([x, y, z]-[x, Jy, Jz]-[Jx, y, Jz]- [Jx, Jy, z])\\
& =\frac{1
}{4}
([x, y, z]-J^2[x, y, z]-J^2[x, y, z]- J^2[x, y, z])\\
&= [x, y, z].
\end{align*}

Conversely, if $[\cdot, \cdot,\cdot]_J = [\cdot, \cdot,\cdot]$, we have
$$4[x, y, z]_J =
[x, y, z]-[x, Jy, Jz]-[Jx, y, Jz]- [Jx, Jy, z]=4[x, y, z].$$
Then $[x, Jy, Jz]+[Jx, y, Jz]+[Jx, Jy, z]=-3[x, y, z]$. Thus we obtain
\begin{align*}
&4J[x, y, z]=4J[x, y, z]_J \\
=&J([x, y, z]-[x, Jy, Jz]-[Jx, y, Jz]- [Jx, Jy, z])\\
=& -[Jx, Jy, Jz]+[Jx, y, z]+[x, Jy, z]+[x, y, Jz]\\
= &3[Jx, y, z]+[Jx, y, z]\\
= &4[Jx, y, z],
\end{align*}
i.e.,$J[x, y, z]=[Jx, y, z]$. Therefore the Proposition holds.
\end{proof}

\begin{prop}\label{prop4.11}
Let $J$ be an almost complex structure on a real $3$-Bihom-Lie algebra $(L, [\cdot, \cdot,\cdot],\alpha,\beta)$.
If $J$ satisfies the following equation
\begin{equation}\label{35}
[x, y, z] = [x, Jy, Jz]+[Jx, y, Jz]+[Jx, Jy, z], \forall x,y,z\in L
\end{equation}
then $J$ is a complex structure on $L$.
\end{prop}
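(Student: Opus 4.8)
The plan is to verify the defining identity $(\ref{eq4.1})$ of a complex structure directly from the hypothesis $(\ref{35})$ together with $J^2=-\mathrm{Id}$, in complete analogy with the proof of Proposition \ref{3.7} for abelian product structures. Since $J$ is assumed to be an almost complex structure, the conditions $J^2=-\mathrm{Id}$, $J\alpha=\alpha J$ and $J\beta=\beta J$ are already in hand, so the only thing left to check is $(\ref{eq4.1})$. I would start from its right-hand side and rewrite each group of terms using $(\ref{35})$.

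The key observation is that $(\ref{35})$ can be exploited in two distinct ways. First, substituting $(x,y,z)\mapsto(Jx,Jy,Jz)$ into $(\ref{35})$ and replacing $J^2$ by $-\mathrm{Id}$ collapses the cubic term: one finds $[Jx,Jy,Jz]=[Jx,y,z]+[x,Jy,z]+[x,y,Jz]$, because each application of $J^2=-\mathrm{Id}$ contributes a sign $-1$ and the two such factors in every summand cancel. Consequently $-[Jx,Jy,Jz]$ exactly cancels the three linear terms $[Jx,y,z]+[x,Jy,z]+[x,y,Jz]$ that appear on the right-hand side of $(\ref{eq4.1})$. Second, applying $J$ to both sides of $(\ref{35})$ yields $J[x,y,z]=J[x,Jy,Jz]+J[Jx,y,Jz]+J[Jx,Jy,z]$, which is precisely the sum of the three remaining $J[\,\cdot\,]$ terms on the right-hand side of $(\ref{eq4.1})$. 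Combining these two simplifications, the right-hand side of $(\ref{eq4.1})$ reduces to $J[x,y,z]$, which is exactly the left-hand side, so $J$ is a complex structure.

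The main obstacle is purely bookkeeping: one must track the signs produced by each substitution of $J^2=-\mathrm{Id}$ and be careful to apply the correct instance of $(\ref{35})$ to each summand, since a single misplaced sign would destroy the cancellation. There is no conceptual difficulty beyond this; the computation mirrors the product-structure case of Proposition \ref{3.7} verbatim, with $E^2=\mathrm{Id}$ replaced by $J^2=-\mathrm{Id}$ and the accompanying sign changes that distinguish $(\ref{eq4.1})$ from $(\ref{21})$.
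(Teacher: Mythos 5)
Your proposal is correct and follows exactly the paper's own computation: the paper likewise expands $-[Jx,Jy,Jz]$ via $(\ref{35})$ applied to $(Jx,Jy,Jz)$ so that the resulting terms $-[Jx,J^2y,J^2z]-[J^2x,Jy,J^2z]-[J^2x,J^2y,Jz]$ cancel the three linear terms after using $J^2=-\mathrm{Id}$, and collapses the three $J[\,\cdot\,]$ terms to $J[x,y,z]$ by applying $J$ to $(\ref{35})$. No gaps; the sign bookkeeping you describe is precisely what the paper carries out.
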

\begin{proof}
By $(\ref{35})$ and $J^2 =-\rm Id$, we have
\begin{align*}
&-[Jx, Jy, Jz]+[Jx, y, z]+[x, Jy, z]+[x, y, Jz]+J[Jx, Jy, z]\\
&+J[x, Jy, Jz]
+J[Jx, y, Jz]\\
=& -[Jx, J^2y, J^2z]-[J^2x, Jy, J^2z]- [J^2x, J^2y, Jz]
+[Jx, y, z]+[x, Jy, z]\\
&+[x, y, Jz]+ J[x, y, z]\\
=& J[x, y, z].
\end{align*}
Thus, $J$ is a complex structure on $L$.
\end{proof}

\begin{defn}
An almost complex structure on a real $3$-Bihom-Lie algebra $(L, [\cdot, \cdot,\cdot],\alpha,\beta)$ is called
an abelian complex structure if $(\ref{35})$ holds.
\end{defn}

\begin{re}\label{re4.13}
Let $J$ be an abelian complex structure on a real $3$-Bihom-Lie algebra $(L, [\cdot, \cdot,\cdot],\alpha,\beta)$. Then
$(L, [\cdot, \cdot,\cdot]_J,\alpha,\beta)$ is an abelian $3$-Bihom-Lie algebra.
\end{re}

\begin{cor}
Let $(L, [\cdot, \cdot,\cdot],\alpha,\beta)$ be a real $3$-Bihom-Lie algebra with $\alpha,\beta$ surjective. Then there is an abelian complex
structure on $L$ if and only if $
L_{\mathds{C}} = Q\oplus P,
$
where $Q$ and $P = \sigma(Q)$ are abelian Bihom subalgebras of $L_{\mathds{C}}$.
\end{cor}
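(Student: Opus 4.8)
The plan is to mirror the argument for abelian product structures in Corollary \ref{cor3.9}, carrying everything over to the complexification $L_{\mathds{C}}$ and using the eigenspace decomposition $L_{\mathds{C}} = L_i \oplus L_{-i}$ furnished by Theorem \ref{thm4}. The only structural difference from the product case is the sign coming from $J^2 = -\mathrm{Id}$ rather than $E^2 = \mathrm{Id}$, which is precisely what turns the abelian defining equation $(\ref{35})$ into the complex counterpart of $(\ref{26})$; this sign will reappear as a factor $i^2 = -1$ in the eigenspace computations and will make the two calculations formally identical.

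For the forward direction, suppose $J$ is an abelian complex structure. Since $(\ref{35})$ forces $J$ to be a complex structure by Proposition \ref{prop4.11}, Theorem \ref{thm4} gives $L_{\mathds{C}} = L_i \oplus L_{-i}$ with $L_{-i} = \sigma(L_i)$, both Bihom subalgebras. First I would note that the complex-trilinear extension $J_{\mathds{C}}$ again satisfies $(\ref{35})$ on $L_{\mathds{C}}$, because $(\ref{35})$ is trilinear in its three arguments and so extends by complex trilinearity (exactly as in Remark \ref{re4.4}). Then for $X, Y, Z \in L_i$ each bracket on the right of $(\ref{35})$ picks up a factor $i\cdot i = -1$, so $(\ref{35})$ collapses to $[X,Y,Z]_{L_{\mathds{C}}} = -3[X,Y,Z]_{L_{\mathds{C}}}$, forcing $[X,Y,Z]_{L_{\mathds{C}}} = 0$; the same computation on $L_{-i}$ shows $L_{-i}$ is abelian. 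Taking $Q = L_i$ and $P = L_{-i}$ proves one implication.

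For the converse, given $L_{\mathds{C}} = Q \oplus P$ with $P = \sigma(Q)$ abelian Bihom subalgebras, I would define $J_{\mathds{C}}$ by $(\ref{jc})$ and restrict it to obtain $J \in \mathrm{End}(L)$ exactly as in Theorem \ref{thm4}, so that $J$ is at least a complex structure. It then remains to verify the abelian identity $(\ref{35})$. Writing a general element as $X_k + Y_k$ with $X_k \in Q$ and $Y_k \in P$, and using $J_{\mathds{C}}(X_k + Y_k) = i(X_k - Y_k)$, the right-hand side of $(\ref{35})$ becomes $-[X_1+Y_1, X_2-Y_2, X_3-Y_3]_{L_{\mathds{C}}} - [X_1-Y_1, X_2+Y_2, X_3-Y_3]_{L_{\mathds{C}}} - [X_1-Y_1, X_2-Y_2, X_3+Y_3]_{L_{\mathds{C}}}$. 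Expanding by trilinearity and discarding $[X_1,X_2,X_3]_{L_{\mathds{C}}}$ and $[Y_1,Y_2,Y_3]_{L_{\mathds{C}}}$, which vanish since $Q$ and $P$ are abelian, leaves exactly the six mixed brackets that sum to $[X_1+Y_1, X_2+Y_2, X_3+Y_3]_{L_{\mathds{C}}}$, the left-hand side. This is the same bookkeeping already performed in the proof of Corollary \ref{cor3.9}, with the product-structure sign now absorbed into the $i^2$ factors. Hence $J_{\mathds{C}}$ satisfies $(\ref{35})$ on $L_{\mathds{C}}$, and restricting to the $\sigma$-fixed-point set $L$ shows $J$ satisfies $(\ref{35})$ on $L$, so $J$ is an abelian complex structure.

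The main obstacle is bookkeeping rather than conceptual: one must be careful that the abelian condition $(\ref{35})$ transfers correctly between $L$ and $L_{\mathds{C}}$ under complex-linear extension and under restriction to the $\sigma$-fixed points, and that the factors of $i$ coming from $J_{\mathds{C}}X = iX$ on $L_i$ (resp. $-i$ on $L_{-i}$) are tracked accurately. Once this extension/restriction compatibility is in hand, both directions reduce to the short eigenvalue computation and the trilinear expansion above, both of which are routine.
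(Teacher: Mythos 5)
Your argument is correct, but it follows a different route from the paper's. The paper does not redo the eigenspace/trilinear-expansion bookkeeping of Corollary \ref{cor3.9}; instead it routes everything through the deformed bracket $[\cdot,\cdot,\cdot]_J$ of $(\ref{J})$ and the isomorphism $\varphi\colon (L,[\cdot,\cdot,\cdot]_J,\alpha,\beta)\to (L_i,[\cdot,\cdot,\cdot]_{L_{\mathds{C}}},\alpha_{\mathds{C}},\beta_{\mathds{C}})$ from Proposition \ref{prop4.9}: the abelian identity $(\ref{35})$ is exactly the statement $[\cdot,\cdot,\cdot]_J=0$ (Remark \ref{re4.13}), so $Q=L_i\cong (L,[\cdot,\cdot,\cdot]_J)$ is abelian, with a short explicit $\sigma$-computation to see that $P=\sigma(Q)$ is abelian too; the converse is the same equivalence read backwards. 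Your proof instead computes directly: the eigenvalue factor $i^2=-1$ gives $[X,Y,Z]=-3[X,Y,Z]$ on $L_i$ in the forward direction, and in the converse you expand $(\ref{35})$ for $J_{\mathds{C}}$ defined by $(\ref{jc})$ and check that the six mixed brackets survive while the pure ones vanish — I verified the sign bookkeeping and it is right. The trade-off is worth noting: the paper's route is shorter on the page but leans on Proposition \ref{prop4.9}, which is where the hypothesis that $\alpha,\beta$ be surjective enters (it is needed to make $[\cdot,\cdot,\cdot]_J$ complex trilinear and $\varphi$ an isomorphism of complex algebras); your direct computation never invokes surjectivity, so it in fact establishes the corollary without that hypothesis, putting it on the same footing as the strong abelian and perfect cases. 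The one point you should state rather than wave at is the extension/restriction compatibility of $(\ref{35})$ between $L$ and $L_{\mathds{C}}$, but your justification (both sides are $\mathds{R}$-trilinear, so the identity passes to the unique complex-trilinear extensions, and $J_{\mathds{C}}\sigma=\sigma J_{\mathds{C}}$ handles the restriction) is sound.
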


\begin{proof}
Let $J$ be an abelian complex structure on $L$. Because $\alpha,\beta$ are surjective and by Proposition \ref{prop4.9}, we obtain that $\varphi$ is a
complex $3$-Bihom-Lie algebra isomorphism from $(L, [\cdot, \cdot,\cdot]_J,\alpha,\beta)$ to $(L_i, [\cdot, \cdot,\cdot]_{L_{\mathds{C}}},\alpha_{\mathds{C}},\beta_{\mathds{C}})$. Using Remark \ref{re4.13}, $(L, [\cdot, \cdot,\cdot]_J,\alpha,\beta)$
is an abelian $3$-Bihom-Lie algebra. Thus, $Q=L_i$ is an abelian Bihom subalgebra of $L_{\mathds{C}}$. Since $P = L_{-i} = \sigma(L_i)$,
for all $x_1+iy_1, x_2+iy_2, x_3+iy_3 \in L_i$, we have
\begin{align*}
&[\sigma(x_1+iy_1), \sigma(x_2+iy_2), \sigma(x_3+iy_3)]_{L_{\mathds{C}}}\\
=& [x_1-iy_1, x_2-iy_2, x_3-iy_3]_{L_{\mathds{C}}}\\
=& [x_1, x_2, x_3]-[x_1, y_2, y_3]- [y_1, x_2, y_3]-[y_1, y_2, x_3]
-i([x_1, x_2, y_3]+[x_1, y_2, x_3]\\
&+ [y_1, x_2, x_3]-[y_1, y_2, y_3])\\
=& \sigma([x_1, x_2, x_3]-[x_1, y_2, y_3]- [y_1, x_2, y_3]-[y_1, y_2, x_3]+i([x_1, x_2, y_3]+[x_1, y_2, x_3]\\
&+ [y_1, x_2, x_3]-[y_1, y_2, y_3]))\\
=& \sigma[x_1+iy_1, x_2+iy_2, x_3+ iy_3]_{L_{\mathds{C}}}\\
=& 0.
\end{align*}
Therefore, $P$ is an abelian Bihom subalgebra of $L_{\mathds{C}}$.

Conversely, by Theorem \ref{thm4}, $L$ has a complex structure $J$. Moreover, by Proposition \ref{prop4.9},
we have a complex $3$-Bihom-Lie algebra isomorphism $\varphi$ from $(L, [\cdot, \cdot,\cdot]_J,\alpha,\beta)$ to $(Q, [\cdot, \cdot,\cdot]_{L_{\mathds{C}}},\alpha_{\mathds{C}},\beta_{\mathds{C}})$. So $(L, [\cdot, \cdot,\cdot]_J,\alpha,\beta)$
is an abelian $3$-Bihom-Lie algebra. By the definition of $[\cdot, \cdot,\cdot]_J$, we obtain that $J$ is an abelian complex
structure on $L$. The proof is finished.
\end{proof}

\begin{prop}
Let $J$ be an almost complex structure on a real $3$-Bihom-Lie algebra $(L, [\cdot, \cdot,\cdot],\alpha,\beta)$.
If $J$ satisfies the following equation
\begin{equation}\label{38}
[x, y, z] = -J[Jx, y, z]-J[x, Jy, z]-J[x, y, Jz], \forall x,y,z\in L,
\end{equation}
then $J$ is a complex structure on $L$.
\end{prop}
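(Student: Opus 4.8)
The plan is to verify the defining identity (\ref{eq4.1}) directly, by substituting the hypothesis (\ref{38}) into its right-hand side and simplifying with $J^2=-\mathrm{Id}$. This is the exact complex analogue of Proposition \ref{re} (the strong abelian product case, equation (\ref{30})), so I expect the same bookkeeping to go through, with the only differences being the sign changes that come from using $J^2=-\mathrm{Id}$ in place of $E^2=\mathrm{Id}$.

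First I would record two consequences of (\ref{38}). Applying (\ref{38}) to the triple $(Jx,Jy,Jz)$ and using $J^2=-\mathrm{Id}$ together with trilinearity gives
$$[Jx,Jy,Jz]=J[x,Jy,Jz]+J[Jx,y,Jz]+J[Jx,Jy,z],$$
so that $-[Jx,Jy,Jz]=-J[x,Jy,Jz]-J[Jx,y,Jz]-J[Jx,Jy,z]$. Applying $J$ to both sides of (\ref{38}) and again using $J^2=-\mathrm{Id}$ gives
$$[Jx,y,z]+[x,Jy,z]+[x,y,Jz]=J[x,y,z].$$

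Then I would substitute both identities into the right-hand side of (\ref{eq4.1}), namely into
$-[Jx,Jy,Jz]+[Jx,y,z]+[x,Jy,z]+[x,y,Jz]+J[Jx,Jy,z]+J[x,Jy,Jz]+J[Jx,y,Jz]$. After the substitution the three terms $-J[x,Jy,Jz]$, $-J[Jx,y,Jz]$, $-J[Jx,Jy,z]$ cancel against $J[x,Jy,Jz]$, $J[Jx,y,Jz]$, $J[Jx,Jy,z]$, leaving exactly $J[x,y,z]$. This is precisely (\ref{eq4.1}), so $J$ is a complex structure on $L$.

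The computation is entirely routine and presents no genuine obstacle; the single point requiring care is sign tracking, since (\ref{38}) carries an overall minus sign and each collapse of two factors of $J$ produces a further sign through $J^2=-\mathrm{Id}$ — this is exactly the discrepancy with the product-structure argument of Proposition \ref{re}. Alternatively, and more in keeping with the surrounding proofs, one may leave the $J^2$ factors uncollapsed in the intermediate display (as in the proof of Proposition \ref{prop4.11}) and read off the cancellations only at the final step.
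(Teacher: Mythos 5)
Your proof is correct and follows essentially the same route as the paper: both substitute (\ref{38}) into the right-hand side of (\ref{eq4.1}), using the instance at $(Jx,Jy,Jz)$ together with $J$ applied to (\ref{38}), and cancel via $J^2=-\mathrm{Id}$. The only cosmetic difference is that you collapse the $J^2$ factors before displaying the cancellation, whereas the paper leaves them uncollapsed in the intermediate line, as you yourself note.
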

\begin{proof}
By $(\ref{38})$ and $J^2 =-\rm Id$ we have
\begin{align*}
&-[Jx, Jy, Jz]+[Jx, y, z]+[x, Jy, z]+[x, y, Jz]\\
&+J[Jx, Jy, z]+J[x, Jy, Jz]+J[Jx, y, Jz]\\
= &J[J^2x, Jy, Jz]+J[Jx, J^2y, Jz]+ J[Jx, Jy, J^2z]+J[x, y, z]\\
&+J[Jx, Jy, z]+J[x, Jy, Jz]+J[Jx, y, Jz]\\
=& J[x, y, z].
\end{align*}
Thus $J$ is a complex structure on $L$.
\end{proof}

\begin{defn}
Let $(L, [\cdot, \cdot,\cdot],\alpha,\beta)$ be a real $3$-Bihom-Lie
algebra. An almost complex structure $J$ on $L$ is called a strong abelian complex structure if $(\ref{38})$ holds.
\end{defn}

\begin{cor}
Let $(L, [\cdot, \cdot,\cdot],\alpha,\beta)$ be a real $3$-Bihom-Lie algebra. Then there is a strong abelian complex
structure on $L$ if and only if $
L_{\mathds{C}} = Q\oplus P,
$
where $Q$ and $P = \sigma(Q)$ are abelian Bihom subalgebras of $L_{\mathds{C}}$ such that $[Q,Q,P]_{L_{\mathds{C}}}\subseteq Q, [Q,P,Q]_{L_{\mathds{C}}}\subseteq Q,[P,Q,Q]_{L_{\mathds{C}}}\subseteq Q,[P,P,Q]_{L_{\mathds{C}}}\subseteq P,[P,Q,P]_{L_{\mathds{C}}}\subseteq P$ and $[Q,P,P]_{L_{\mathds{C}}}\subseteq P$.
\end{cor}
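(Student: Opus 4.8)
The plan is to follow exactly the scheme already established for Theorem \ref{thm4} and for the preceding Proposition, namely to pass to the complexification and read everything off from the $\pm i$-eigenspaces of $J_{\mathds{C}}$. Throughout I would write $Q=L_i$ and $P=L_{-i}=\sigma(Q)$, so that $J_{\mathds{C}}$ is multiplication by $i$ on $Q$ and by $-i$ on $P$, and I would organize every case by the single scalar $\epsilon_1+\epsilon_2+\epsilon_3$, where an element of $Q$ carries $\epsilon=+1$ and an element of $P$ carries $\epsilon=-1$.

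\textbf{Necessity.} Given a strong abelian complex structure $J$ on $L$, I would first extend it complex-linearly to $J_{\mathds{C}}$ as in Remark \ref{re4.4}; then $J_{\mathds{C}}$ is a complex structure on $L_{\mathds{C}}$ still satisfying $(\ref{38})$, and Theorem \ref{thm4} yields $L_{\mathds{C}}=Q\oplus P$ with $Q,P=\sigma(Q)$ Bihom subalgebras. The heart of the argument is a single identity: for homogeneous $A,B,C$ with $J_{\mathds{C}}A=\epsilon_1 iA$, $J_{\mathds{C}}B=\epsilon_2 iB$, $J_{\mathds{C}}C=\epsilon_3 iC$, equation $(\ref{38})$ collapses by complex trilinearity to
\begin{equation*}
[A,B,C]_{L_{\mathds{C}}}=-\,i(\epsilon_1+\epsilon_2+\epsilon_3)\,J_{\mathds{C}}[A,B,C]_{L_{\mathds{C}}}.
\end{equation*}
Writing $[A,B,C]_{L_{\mathds{C}}}=q+p$ in the direct sum $Q\oplus P$ and applying $J_{\mathds{C}}$, this identity forces $p=0$ when the sum is $+1$ (so the bracket lies in $Q$), forces $q=0$ when the sum is $-1$ (so it lies in $P$), and forces $q=p=0$ when the sum is $\pm 3$ (so $Q,P$ are abelian). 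Reading the three slot positions off then gives precisely $[Q,Q,P]_{L_{\mathds{C}}},[Q,P,Q]_{L_{\mathds{C}}},[P,Q,Q]_{L_{\mathds{C}}}\subseteq Q$ and $[P,P,Q]_{L_{\mathds{C}}},[P,Q,P]_{L_{\mathds{C}}},[Q,P,P]_{L_{\mathds{C}}}\subseteq P$.

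\textbf{Sufficiency.} Conversely, from such a decomposition I would define $J_{\mathds{C}}$ by $(\ref{jc})$ and set $J\triangleq J_{\mathds{C}}|_L$; by the proof of Theorem \ref{thm4} this is an almost complex structure on $L$, so only $(\ref{38})$ remains. By complex trilinearity it suffices to check $(\ref{38})$ for $J_{\mathds{C}}$ on the eight homogeneous triples from $Q\cup P$, and in each the right-hand side again equals $-\,i(\epsilon_1+\epsilon_2+\epsilon_3)J_{\mathds{C}}[A,B,C]_{L_{\mathds{C}}}$. For the all-$Q$ and all-$P$ triples abelianness makes both sides vanish; for the mixed triples the six inclusion hypotheses give $J_{\mathds{C}}[A,B,C]_{L_{\mathds{C}}}=\pm i[A,B,C]_{L_{\mathds{C}}}$, which turns the right-hand side back into $[A,B,C]_{L_{\mathds{C}}}$. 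Thus $J_{\mathds{C}}$ satisfies $(\ref{38})$ on $L_{\mathds{C}}$, and since $J_{\mathds{C}}\sigma=\sigma J_{\mathds{C}}$ while $L$ is the fixed-point set of $\sigma$, the identity restricts to $J$.

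The routine parts are the complexification and the eigenspace split, both inherited verbatim from Theorem \ref{thm4}. The one place demanding care --- and what I would regard as the main obstacle --- is the sufficiency direction, where one must confirm that the six inclusions together with abelianness of $Q$ and $P$ are \emph{exactly} enough to regenerate $(\ref{38})$ on every homogeneous triple, with nothing lost in descending from $L_{\mathds{C}}$ to the real form $L$. Tracking the eigenvalue signs $\epsilon_j$ across the three slots is the only real source of potential error, and funneling all eight cases through the scalar $\epsilon_1+\epsilon_2+\epsilon_3$ is what keeps the bookkeeping honest.
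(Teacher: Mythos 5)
Your proof is correct, and it follows the approach the paper clearly intends: the paper actually states this corollary with no proof at all, relying on the reader to adapt the arguments of Theorem \ref{thm4} and the surrounding propositions, which is precisely what you do. Your reduction of equation~(\ref{38}) on homogeneous triples to the single identity $[A,B,C]_{L_{\mathds{C}}}=-i(\epsilon_1+\epsilon_2+\epsilon_3)J_{\mathds{C}}[A,B,C]_{L_{\mathds{C}}}$ correctly yields abelianness of $Q$ and $P$ from the $\pm3$ cases and the six inclusions from the $\pm1$ cases, and the converse verification and descent to the real form are sound.
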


\begin{prop}
Let $J$ be an almost complex structure on a real $3$-Bihom-Lie algebra $(L, [\cdot, \cdot,\cdot],\alpha,\beta)$.
If $J$ satisfies the following equation
\begin{equation}\label{39}
J[x, y, z] = -[Jx, Jy, Jz], \forall x,y,z\in L,
\end{equation}
then $J$ is a complex structure on $L$.
\end{prop}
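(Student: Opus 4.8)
The plan is to verify the defining identity (\ref{eq4.1}) of a complex structure directly, following the same template used in the proof of Proposition \ref{3.13} for perfect product structures. Starting from the right-hand side of (\ref{eq4.1}), I would substitute the hypothesis (\ref{39}) into each of the three terms that carry an outer $J$ applied to a bracket, namely $J[Jx,Jy,z]$, $J[x,Jy,Jz]$ and $J[Jx,y,Jz]$.

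For each such term, (\ref{39}) converts $J[a,b,c]$ into $-[Ja,Jb,Jc]$, and then the two inner entries that already carry a $J$ acquire a second $J$, so that $J^2=-\mathrm{Id}$ produces two sign changes which cancel. Concretely, $J[Jx,Jy,z]=-[J^2x,J^2y,Jz]=-[x,y,Jz]$, and likewise $J[x,Jy,Jz]=-[Jx,y,z]$ and $J[Jx,y,Jz]=-[x,Jy,z]$. Substituting these three identities into the right-hand side of (\ref{eq4.1}), the six mixed brackets $\pm[Jx,y,z]$, $\pm[x,Jy,z]$, $\pm[x,y,Jz]$ cancel in pairs, leaving only the term $-[Jx,Jy,Jz]$.

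Finally, I would invoke (\ref{39}) once more, now read in the forward direction, to recognise $-[Jx,Jy,Jz]=J[x,y,z]$, which is exactly the left-hand side of (\ref{eq4.1}). This shows that $J$ satisfies the complex structure condition. Since $J$ is already an almost complex structure by hypothesis, that is $J^2=-\mathrm{Id}$, $J\alpha=\alpha J$ and $J\beta=\beta J$, it follows that $J$ is a complex structure on $L$.

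I do not anticipate any genuine obstacle, since the argument is a purely formal calculation of the same shape as the earlier propositions. The only point requiring care is the sign bookkeeping when applying $J^2=-\mathrm{Id}$ inside the brackets: each collapsed term carries two sign changes that must be seen to cancel, and the three pairs of mixed brackets must be paired correctly. As long as these signs are tracked, the mixed terms vanish and the identity closes.
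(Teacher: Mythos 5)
Your proposal is correct and matches the paper's own proof: both substitute (\ref{39}) into the three terms $J[Jx,Jy,z]$, $J[x,Jy,Jz]$, $J[Jx,y,Jz]$, use $J^2=-\mathrm{Id}$ to collapse them to $-[x,y,Jz]$, $-[Jx,y,z]$, $-[x,Jy,z]$ so that the mixed brackets cancel, and identify the remaining $-[Jx,Jy,Jz]$ with $J[x,y,z]$ via (\ref{39}). The sign bookkeeping you describe is exactly right.
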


\begin{proof}
By $(\ref{39})$ and $J^2 =-\rm Id$, we have
\begin{align*}
&-[Jx, Jy, Jz]+[Jx, y, z]+[x, Jy, z]+[x, y, Jz]+J[Jx, Jy, z]\\
&+J[x, Jy, Jz]
+J[Jx, y, Jz]\\
=& +J[x, y, z]+[Jx, y, z]+[x, Jy, z]
+[x, y, Jz]-[J^2x, J^2y, Jz]\\
&-[Jx, J^2y, J^2z]-[J^2x, Jy, J^2z]\\
=& J[x, y, z].
\end{align*}
Thus, $J$ is a complex structure on $L$.
\end{proof}

\begin{defn}
Let $(L, [\cdot, \cdot,\cdot],\alpha,\beta)$ be a real $3$-Bihom-Lie
algebra. An almost complex structure $J$ on $L$ is called a perfect complex structure if $(\ref{39})$ holds.
\end{defn}

\begin{cor}
Let $(L, [\cdot, \cdot,\cdot],\alpha,\beta)$ be a real $3$-Bihom-Lie algebra. Then there is a perfect complex
structure on $L$ if and only if $
L_{\mathds{C}} = Q\oplus P,
$
where $Q$ and $P = \sigma(Q)$ are Bihom subalgebras of $L_{\mathds{C}}$ such that $[Q,Q,P]_{L_{\mathds{C}}}\subseteq P, [Q,P,Q]_{L_{\mathds{C}}}\subseteq P,[P,Q,Q]_{L_{\mathds{C}}}\subseteq P,[P,P,Q]_{L_{\mathds{C}}}\subseteq Q,[P,Q,P]_{L_{\mathds{C}}}\subseteq Q$ and $[Q,P,P]_{L_{\mathds{C}}}\subseteq Q$.
\end{cor}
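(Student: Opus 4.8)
The plan is to pass to the complexification $L_{\mathds{C}}$ and exploit the eigenspace decomposition already furnished by Theorem \ref{thm4}, so that the six bracket conditions become statements about how the three-bracket interacts with the $\pm i$-eigenspaces of $J_{\mathds{C}}$. Throughout I will use two facts: first, that if $J$ is a complex structure then its complex-linear extension $J_{\mathds{C}}$ acts as multiplication by $i$ on $Q=L_i$ and by $-i$ on $P=L_{-i}=\sigma(L_i)$; and second, that any complex-trilinear bracket identity satisfied by $J$ on $L$ automatically extends to $J_{\mathds{C}}$ on $L_{\mathds{C}}$, since $L$ spans $L_{\mathds{C}}$ over $\mathds{C}$ and both the bracket and $J_{\mathds{C}}$ are the complex extensions of their real counterparts.

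For the forward direction, suppose $J$ is a perfect complex structure, so $(\ref{39})$ holds. By the proposition showing that $(\ref{39})$ forces the complex-structure identity, $J$ is a complex structure, and Theorem \ref{thm4} yields $L_{\mathds{C}}=Q\oplus P$ with $Q=L_i$ and $P=L_{-i}$ Bihom subalgebras. Since $(\ref{39})$ extends to $J_{\mathds{C}}$, I would test it on homogeneous triples. The governing observation is a parity count: assigning $Q$ degree $0$ and $P$ degree $1$, a triple containing $k$ entries from $P$ produces, via $(\ref{39})$ applied to $J_{\mathds{C}}$, the scalar $i(-1)^{k}$ in front of the bracket, so $[x,y,z]_{L_{\mathds{C}}}$ lies in $L_i=Q$ when $k$ is even and in $L_{-i}=P$ when $k$ is odd. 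Concretely, for $X_1,X_2\in Q$ and $Y\in P$,
\[
J_{\mathds{C}}[X_1,X_2,Y]_{L_{\mathds{C}}}=-[iX_1,iX_2,-iY]_{L_{\mathds{C}}}=-i[X_1,X_2,Y]_{L_{\mathds{C}}},
\]
whence $[Q,Q,P]_{L_{\mathds{C}}}\subseteq P$; running the identical computation with the single $P$-entry moved to each of the three slots gives the three $k=1$ inclusions into $P$, and placing two $P$-entries gives the three $k=2$ inclusions into $Q$.

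For the converse, Theorem \ref{thm4} already produces from $L_{\mathds{C}}=Q\oplus P$ a complex structure $J$ on $L$, realised as $J\triangleq J_{\mathds{C}}|_L$ with $J_{\mathds{C}}$ defined by $(\ref{jc})$. It remains to check that this $J$ is perfect; since $L\subseteq L_{\mathds{C}}$ and both the bracket and $J$ restrict from $L_{\mathds{C}}$, it suffices to verify $(\ref{39})$ for $J_{\mathds{C}}$ on $L_{\mathds{C}}$. By complex trilinearity I reduce to homogeneous triples and run the same bookkeeping: if the triple has $k$ entries in $P$, the hypotheses force $[x,y,z]_{L_{\mathds{C}}}$ into $Q$ (when $k$ is even) or $P$ (when $k$ is odd), so that $J_{\mathds{C}}[x,y,z]_{L_{\mathds{C}}}=i(-1)^{k}[x,y,z]_{L_{\mathds{C}}}$, while on the other side
\[
-[J_{\mathds{C}}x,J_{\mathds{C}}y,J_{\mathds{C}}z]_{L_{\mathds{C}}}=-\,i^{\,3-k}(-i)^{k}[x,y,z]_{L_{\mathds{C}}}=i(-1)^{k}[x,y,z]_{L_{\mathds{C}}}.
\]
The two sides coincide, so $(\ref{39})$ holds for $J_{\mathds{C}}$, hence for $J$, and $J$ is a perfect complex structure.

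The computations in both directions are short once the eigenvalue action of $J_{\mathds{C}}$ is in hand; the only point requiring care is the converse, where one must confirm that the six stated inclusions together with the subalgebra conditions $[Q,Q,Q]_{L_{\mathds{C}}}\subseteq Q$ and $[P,P,P]_{L_{\mathds{C}}}\subseteq P$ exhaust all eight homogeneous slot-patterns, i.e.\ that they assemble into the $\mathbb{Z}/2$-grading (degree equal to the number of $P$-entries modulo $2$) that drives the parity argument. Verifying this finite bookkeeping is the main obstacle; the final descent from $L_{\mathds{C}}$ to $L$ is immediate from $J=J_{\mathds{C}}|_L$, and no surjectivity hypothesis on $\alpha,\beta$ is needed.
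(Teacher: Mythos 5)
Your argument is correct, and it is exactly the eigenvalue bookkeeping the paper uses for the analogous results (the paper states this corollary without proof, but its proofs of Proposition~\ref{3.13} and of the strict complex structure corollary proceed by the same computation of $J_{\mathds{C}}$ or $E$ on homogeneous triples in the $\pm i$- resp.\ $\pm1$-eigenspaces). Your parity count $-[J_{\mathds{C}}x,J_{\mathds{C}}y,J_{\mathds{C}}z]_{L_{\mathds{C}}}=i(-1)^k[x,y,z]_{L_{\mathds{C}}}$ for $k$ entries in $P$ checks out in both directions, and your observation that no surjectivity of $\alpha,\beta$ is needed is consistent with the statement.
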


\begin{cor}
Let $J$ be a strict complex structure on a real $3$-Bihom-Lie algebra $(L, [\cdot, \cdot,\cdot],\alpha,\beta)$  with $\alpha,\beta$ surjective, Then $J$ is
a perfect complex structure on $L$.
\end{cor}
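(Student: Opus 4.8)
The plan is to derive the perfect-structure equation $(\ref{39})$ directly from the strict-structure equation $(\ref{32})$, using the surjectivity of $\alpha,\beta$ together with the Bihom-skewsymmetry (item (3) of Definition 2.1). The whole argument rests on first promoting the one-slot identity to a fully symmetric one.

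So the first step is to prove that for all $x,y,z\in L$,
\[
J[x,y,z]=[Jx,y,z]=[x,Jy,z]=[x,y,Jz].
\]
This is precisely the identity already invoked in the proof of the strict-complex-structure corollary. To establish the middle equality I would use surjectivity to write $x=\beta(a)$, $y=\beta(b)$, $z=\alpha(c)$, and compute $[x,Jy,z]=[\beta(a),\beta(Jb),\alpha(c)]$; swapping the first two entries by Bihom-skewsymmetry, applying $(\ref{32})$ in the first slot, and swapping back (picking up two signs that cancel) yields $J[x,y,z]$. The last equality is obtained in the same manner, now using the second Bihom-skewsymmetry relation (interchange of the second and third entries) to bring the $J$-entry into the first slot and back. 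Throughout, the relations $J\alpha=\alpha J$ and $J\beta=\beta J$ are used to slide $J$ past $\alpha$ and $\beta$.

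With the symmetric identity in hand, the conclusion is a short computation: pulling $J$ out of each slot in turn,
\[
[Jx,Jy,Jz]=J[x,Jy,Jz]=J^2[x,y,Jz]=J^3[x,y,z]=-J[x,y,z],
\]
where the final step uses $J^2=-\mathrm{Id}$, hence $J^3=-J$. Therefore $-[Jx,Jy,Jz]=J[x,y,z]$, which is exactly $(\ref{39})$, and so $J$ is a perfect complex structure.

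The main obstacle is the first step, the promotion of $(\ref{32})$ to the symmetric three-slot identity: this is the only place where surjectivity of $\alpha,\beta$ is genuinely needed (it lets every triple be written in the image of $\beta,\beta,\alpha$ so that Bihom-skewsymmetry can be applied), and some care is required to keep track of the signs produced by the two skewsymmetry relations. Once that identity is secured, the perfect-structure equation falls out purely from $J^3=-J$. As a consistency check one may instead argue through the decomposition $L_{\mathds{C}}=L_i\oplus L_{-i}$: the strict-complex-structure corollary gives $[L_i,L_i,L_{-i}]_{L_{\mathds{C}}}=[L_{-i},L_{-i},L_i]_{L_{\mathds{C}}}=0$, and surjectivity forces the remaining four mixed brackets to vanish as well, so all six mixed triple brackets are $0$ and in particular lie in the subspaces required by the characterization of a perfect complex structure.
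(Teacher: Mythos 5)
Your proof is correct, and it follows the route the paper itself implicitly takes: the paper states this corollary without proof, but the key ingredient you establish first --- that surjectivity of $\alpha,\beta$ together with Bihom-skewsymmetry upgrades $(\ref{32})$ to $J[x,y,z]=[Jx,y,z]=[x,Jy,z]=[x,y,Jz]$ --- is exactly the identity the paper asserts at the start of the proof of Proposition 4.10, after which $(\ref{39})$ follows from $J^3=-J$ just as you compute. Your sign bookkeeping in the two skewsymmetry swaps checks out, so the argument is complete.
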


\begin{ex}\label{ex2}
Let $L$ be a $4$-dimensional vector space with respect to a basis $\{e_1, e_2, e_3 ,e_4\}$, the non-zero bracket and $\alpha,\beta$ be given by
\begin{align*}
[e_1, e_2,e_3] &= [e_1, e_3,e_2] = [e_2, e_3,e_1] = e_4,~[e_2, e_1,e_3] = [e_3, e_1,e_2] = [e_3, e_2,e_1] =- e_4,\\
[e_1, e_4,e_2] &= [e_2, e_1,e_4] = [e_2, e_4,e_1] = e_3,~[e_1, e_2,e_4] = [e_4, e_1,e_2] = [e_4, e_2,e_1] =- e_3,\\
[e_3, e_1,e_4] &= [e_3, e_4,e_1] = [e_4, e_1,e_3] = e_2,~[e_1, e_3,e_4] = [e_1, e_4,e_3] = [e_4, e_3,e_1] =- e_2,\\
[e_3, e_2,e_4] &= [e_4, e_2,e_3] = [e_4, e_3,e_2] = e_1,~[e_2, e_3,e_4] = [e_2, e_4,e_3] = [e_3, e_4,e_2] =- e_1,
\end{align*}
$$\alpha=\left(                 
  \begin{array}{cccc}   
    1 & 0 & 0 & 0\\  
    0 & -1 & 0 & 0\\
    0 & 0 & 1 & 0\\
   0 & 0 & 0 & -1
  \end{array}
\right),~\beta=\rm{Id}. $$ Then $(L, [\cdot, \cdot,\cdot], \alpha,\beta)$ is a $3$-Bihom Lie algebra.
So
\begin{equation*}
 J_1=\left(                 
  \begin{array}{cccc}   
    0 & 0 & -1 & 0\\  
    0 & 0 & 0 & 1\\
    1 & 0 & 0 & 0\\
   0 & -1 & 0 & 0
  \end{array}
\right)
and ~ J_2=\left(                 
  \begin{array}{cccc}   
    0 & 0 & -1 & 0\\  
    0 & 0 & 0 & -1\\
    1 & 0 & 0 & 0\\
   0 & 1 & 0 & 0
  \end{array}
\right)
 \end{equation*}
 are strong abelian complex structures.
 \begin{equation*} J_3=\left(                 
  \begin{array}{cccc}   
    1 & 0 & 1 & 0\\  
    0 & 1 & 0 & 1\\
    -2 & 0 & -1 & 0\\
   0 & -2 & 0 & -1
  \end{array}
\right),  J_4=\left(                 
  \begin{array}{cccc}   
    1 & 0 & -1 & 0\\  
    0 & -1 & 0 & 2\\
    2 & 0 & -1 & 0\\
   0 & -1 & 0 & 1
  \end{array}
\right) and ~ J_5=\left(                 
  \begin{array}{cccc}   
    -1 & 0 & -1 & 0\\  
    0 & 1 & 0 & 2\\
    2 & 0 & 1 & 0\\
   0 & -1 & 0 & -1
  \end{array}
\right)
\end{equation*}
are abelian complex structures.
\end{ex}

At the end of this section, we give a condition between a complex structure and a product
structure on a $3$-Bihom-Lie algebra to introduce the definition of a complex product structure. And we show the relation between a
complex structure and a product structure on a complex $3$-Bihom-Lie algebra.

\begin{prop}\label{4.23}
Let $(L, [\cdot, \cdot,\cdot],\alpha,\beta)$ be a complex $3$-Bihom-Lie algebra. Then  $E$ is a product structure on $L$
if and only if $ J = iE$ is a complex structure on $L$.
\end{prop}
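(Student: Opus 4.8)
The plan is to exploit the bijective correspondence $E \mapsto J = iE$ (with inverse $E = -iJ$) on the $\mathds{C}$-linear endomorphisms of $L$, translating each defining condition of a product structure into the corresponding condition for a complex structure. The key structural fact that makes everything work is that, on a \emph{complex} $3$-Bihom-Lie algebra, the bracket is $\mathds{C}$-trilinear, so scalars such as $-i$ may be pulled out of any slot.

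First I would dispose of the pointwise conditions. Since $J^2 = (iE)^2 = i^2E^2 = -E^2$, we see that $E^2 = \mathrm{Id}$ is equivalent to $J^2 = -\mathrm{Id}$; the excluded cases $E \neq \pm\mathrm{Id}$ correspond to $J \neq \pm i\,\mathrm{Id}$. Moreover $J\alpha = iE\alpha$ and $\alpha J = i\alpha E$, so $\alpha E = E\alpha$ holds if and only if $\alpha J = J\alpha$, and likewise for $\beta$. Thus $E$ is an almost product structure if and only if $J = iE$ is an almost complex structure.

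The heart of the argument is to substitute $E = -iJ$ into the product-structure identity $(\ref{21})$ and simplify, using $\mathds{C}$-trilinearity, to recover the complex-structure identity $(\ref{eq4.1})$. Computing term by term: the left side is $E[x,y,z] = -iJ[x,y,z]$; the fully twisted term gives $[Ex,Ey,Ez] = (-i)^3[Jx,Jy,Jz] = i[Jx,Jy,Jz]$; each single-slot term such as $[Ex,y,z]$ carries one factor $-i$; and each of the three terms $-E[Ex,Ey,z]$, $-E[x,Ey,Ez]$, $-E[Ex,y,Ez]$ contributes $-iJ[Jx,Jy,z]$ and its analogues, the inner factor $(-i)^2 = -1$ cancelling the explicit minus sign and leaving a single factor $-i$ from the outer $E$. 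Dividing the resulting equality through by $-i$ produces exactly $(\ref{eq4.1})$. Running the computation backward, substituting $J = iE$ into $(\ref{eq4.1})$ and dividing by $i$, recovers $(\ref{21})$, so the two identities are equivalent under the correspondence.

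The argument is essentially a bookkeeping exercise, so the main obstacle is purely computational: tracking the powers of $-i$ across all seven terms of $(\ref{21})$ and confirming that the scalar factors conspire to reproduce $(\ref{eq4.1})$ with the correct signs. There is no conceptual difficulty once $\mathds{C}$-trilinearity is invoked; the one point worth emphasizing is that pulling the scalar $-i$ out of each bracket slot is legitimate precisely because $L$ is a complex, not merely real, $3$-Bihom-Lie algebra.
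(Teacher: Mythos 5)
Your proposal is correct and follows essentially the same route as the paper: both arguments check the almost-structure conditions directly and then convert identity $(\ref{21})$ into $(\ref{eq4.1})$ by substituting $E=-iJ$ (equivalently, multiplying through by $i$) and pulling scalar factors out of the bracket via $\mathds{C}$-trilinearity. Your scalar bookkeeping ($(-i)^3=i$ for the triple term, a single $-i$ for each remaining term) matches the paper's computation, so no further changes are needed.
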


\begin{proof}
Let $E$ be a product structure on $L$. Then we have $J^2 = i^2E^2 =-\rm Id$, $J\alpha=iE\alpha=\alpha iE=\alpha J$ and $J\beta=\beta J$, i.e., $J$ is an almost
complex structure on $L$. Moreover, by $(\ref{21})$ we can get
\begin{align*}
&J[x, y, z] = iE[x, y, z]\\
=&i([Ex, Ey, Ez]+[Ex, y, z]+[x, Ey, z]+[x, y, Ez]-E[Ex, Ey, z]\\
&-E[x, Ey, Ez]-E[Ex, y, Ez])\\
= &-[iEx, iEy, iEz]+[iEx, y, z]+[x, iEy, z]+[x, y, iEz]+iE[iEx, iEy, z]\\
&+iE[x, iEy, iEz]+iE[iEx, y, iEz]\\
=&-[Jx, Jy, Jz]+[Jx, y, z]+[x, Jy, z]+[x, y, Jz]+J[Jx, Jy, z]\\
&+J[x, Jy, Jz]+J[Jx, y, Jz].
\end{align*}
Thus, $J$ is a complex structure on $L$.

Conversely, since $ J = iE$, we obtain $ E = -iJ$. So using the same way as above, we have $E$ is a product structure on $L$.
\end{proof}

\begin{defn}
Let $(L, [\cdot, \cdot,\cdot],\alpha,\beta)$ be a real $3$-Bihom-Lie algebra. A complex product structure on $L$ is a pair $(J, E)$ consisting of a complex structure $J$ and a product structure $E$ such that $JE =-E J$.
\end{defn}

\begin{re}\label{re4}
Let $(J, E)$ be a complex product structure on a real $3$-Bihom-Lie algebra $(L, [\cdot, \cdot,\cdot],\alpha,\beta)$. For all
$x \in L_+$, we have $E(Jx) =-J(Ex)= -Jx$, which implies that $J(L_+)\subset L_-$. Clearly, we also can obtain
$J(L_-)\subset L_+$. Thus, we get $J(L_-)= L_+$ and $J(L_+)= L_-$.
\end{re}

\begin{thm}
Let $(L, [\cdot, \cdot,\cdot],\alpha,\beta)$ be a real $3$-Bihom-Lie algebra. Then $L$ has a complex product structure if and only if $L$ has a complex structure $J$ and
$
L=L_+\oplus L_-,
$
where $L_-$ and $L_+$ are Bihom subalgebras of $L$ and $J(L_+)= L_-$.
\end{thm}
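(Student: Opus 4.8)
The plan is to prove the two directions separately, exploiting the characterizations already established for product structures (Theorem \ref{thm1}) and complex structures (Theorem \ref{thm4}), together with Remark \ref{re4}.

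For the forward direction, I would start by assuming $(J,E)$ is a complex product structure on $L$. Since $E$ is a product structure, Theorem \ref{thm1} immediately gives the decomposition $L=L_+\oplus L_-$ into Bihom subalgebras. Since $J$ is a complex structure (by definition of the pair), $L$ has a complex structure. The only remaining point is to verify $J(L_+)=L_-$, but this is precisely the content of Remark \ref{re4}, which uses the anticommutation relation $JE=-EJ$ to show $J(L_+)\subseteq L_-$ and $J(L_-)\subseteq L_+$, and then $J^2=-\mathrm{Id}$ (so $J$ is invertible) upgrades these inclusions to equalities. So the forward direction is essentially a bookkeeping assembly of prior results.

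For the converse, I would assume $L$ has a complex structure $J$ together with a decomposition $L=L_+\oplus L_-$ into Bihom subalgebras satisfying $J(L_+)=L_-$. By Theorem \ref{thm1}, the decomposition $L=L_+\oplus L_-$ produces a product structure $E$ defined by $E(x+y)=x-y$ for $x\in L_+$, $y\in L_-$ (equation \eqref{E defin}). The task is then to show the pair $(J,E)$ satisfies $JE=-EJ$. I would check this on each summand: for $x\in L_+$ we have $Ex=x$, so $EJx=-Jx$ (since $Jx\in L_- = J(L_+)$) while $JEx=Jx$, giving $JEx=-EJx$; symmetrically for $y\in L_-$ we have $Ey=-y$ and $Jy\in L_+$, so $EJy=Jy$ and $JEy=-Jy$, again yielding $JEy=-EJy$. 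Since $JE$ and $EJ$ agree up to sign on a spanning set, $JE=-EJ$ on all of $L$, so $(J,E)$ is a complex product structure.

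The main subtlety, rather than a deep obstacle, is making sure the product structure $E$ obtained from the decomposition is compatible with the given $J$, i.e., that $E$ really anticommutes with $J$ rather than merely being \emph{some} product structure. The hypothesis $J(L_+)=L_-$ is exactly what guarantees this: it forces $J$ to swap the two eigenspaces of $E$, which is algebraically equivalent to $JE=-EJ$. I would therefore emphasize the eigenspace computation above as the crux of the converse, and note that the condition $E\neq\pm\mathrm{Id}$ required of a product structure holds automatically here since $L_+$ and $L_-$ are both nontrivial (as $J$ maps one isomorphically onto the other).
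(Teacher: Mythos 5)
Your proposal is correct and takes essentially the same route as the paper's own proof: the forward direction is the same assembly of Theorem \ref{thm1} and Remark \ref{re4}, and the converse is the same construction of $E$ as the $\pm1$-eigenspace involution followed by the verification that $J(L_+)=L_-$ (hence $J(L_-)=L_+$ via $J^2=-\mathrm{Id}$) forces $EJ=-JE$. Your added remark that $E\neq\pm\mathrm{Id}$ holds automatically because both summands are nontrivial is a small point the paper leaves implicit.
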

\begin{proof}
Let $(J, E)$ be a complex product structure and $L_{\pm1}$ be the eigenspaces corresponding
to the eigenvalues $\pm1$ of $E$. By Theorem \ref{thm1}, $
L=L_+\oplus L_-
$ and $L_-$, $L_+$ are Bihom subalgebras of $L$. And by Remark \ref{re4} we have $J(L_+)= L_-$.

Conversely, we can define a linear map $E : L \rightarrow L$ by
$$E(x +y) = x-y, \forall x \in L_+, y\in L_-.$$
By Theorem \ref{thm1}, $E$ is a product structure on $L$. By $J(L_+)= L_-$ and $J^2 = -\rm{Id}$, we have $J(L_-)= L_+$. So $\forall x \in L_+, y\in L_-$,
$$E(J(x+y)) = E(J(x)+J(y)) = -J(x)+ J(y) = -J(E(x+y)),$$ i.e., $EJ=-JE$.
Thus, $(J, E)$ is a complex product structure on $L$.
\end{proof}

\begin{ex}
Consider the complex structures on the 4-dimensional $3$-Bihom-Lie algebra in Example \ref{ex2}. We have \begin{equation*}       
E_1=\left(                 
  \begin{array}{cccc}   
    1 & 0 & 0 & 0\\  
    0 & 1 & 0 & 0\\
    0 & 0 & -1 & 0\\
    0 & 0 & 0 & -1
  \end{array}
\right),
E_2=\left(                 
  \begin{array}{cccc}   
    1 & 0 & 0 & 0\\  
    0 & -1 & 0 & 0\\
    0 & 0 & 1 & 0\\
    0 & 0 & 0 & -1
  \end{array}
\right),
E_3=\left(                 
  \begin{array}{cccc}   
    1 & 0 & 0 & 0\\  
    0 & -1 & 0 & 0\\
    0 & 0 & -1 & 0\\
    0 & 0 & 0 & 1
  \end{array}
\right)                 
\end{equation*}
are perfect and abelian product structures. Then $(J_1, E_1)$, $(J_2, E_1)$, $(J_1, E_3)$ and $(J_2, E_3)$, are complex product structures.

\end{ex}

\end{document}